\documentclass[11pt,reqno]{amsart}

\usepackage{a4wide}

\usepackage{macros}
\usepackage{hyperref}

\colorcommentstrue

\draftfalse

\sloppy
\parskip=0.9ex

\usepackage{color}

\newcommand{\red}[1]{\textcolor[rgb]{1.00,0.00,0.00}{#1}}

\newcommand{\s}{{\small{$\Sigma$}}}

\begin{document}
\title{Shrinking Targets versus Recurrence: the quantitative theory }

\authorjason \authorbing \authordavid \authorsanju

\subjclass[2020]{Primary 37B20, Secondary 37D20}
\keywords{Shrinking target sets, recurrence, quantitative Borel-Cantelli}
\dedicatory{Dedicated to Bridget for making it to middle age! }

\begin{Abstract}
Let $X = [0,1]$, and let $T:X\to X$ be an expanding piecewise linear map sending each interval of linearity to $[0,1]$. For $\psi:\N\to\R_{\geq 0}$, $x\in X$, and $N\in\N$ we consider the recurrence counting function
\[
R(x,N;T,\psi) := \#\{1\leq n\leq N: d(T^n x, x) < \psi(n)\}.
\]
We show that for any $\epsilon > 0$ we have
\[
R(x,N;T,\psi) = \Psi(N)+O\left(\Psi^{1/2}(N) \ (\log\Psi(N))^{3/2+\varepsilon}\right)
\]
for $\mu$-almost all $x\in X$ and for all $N\in\N$, where $\Psi(N):= 2 \sum_{n=1}^N  \psi(n)$. We also prove a generalization of this result to higher dimensions.
\end{Abstract}
\maketitle

\section{Introduction: background and motivation}  \label{bgm}

Let $(X,d)$  be a compact metric space and $(X,\mathcal{A},\mu,T)$ be an ergodic probability measure preserving system.  Furthermore, given a real, positive function $\psi:\N\to\R_{\ge 0}$ let
$$
R(T,\psi) := \big\{ x \in X : d(T^n x, x) < \psi(n) \hbox{ for infinitely many }n\in \N  \big\}  $$
denote the associated \emph{recurrent set}, and given a point  $x_0 \in X$ let
$$
W( T, \psi) := \big\{ x \in X : d(T^nx, x_0) < \psi(n) \hbox{ for infinitely many }n\in \N  \big\}  $$
denote the  associated \emph{shrinking target set}.
If $\psi=c$ (a constant),   it follows from  from two foundational
results in dynamics (the Poincar\'{e} Recurrence  Theorem and  the Ergodic Theorem) that
$
  \mu( R(T,c) )   = 1 =
 \mu( W(T,c) )  \,     $.
In view of this, it is natural to ask:
what is the  $\mu$-measure of  the sets  if $\psi(n) \to 0 $ as $n \to \infty$?
In turn, whenever the $\mu$-measure is zero,  it is natural to ask about the Hausdorff dimension of the sets under consideration.   In this paper we will concentrate our attention solely to the $\mu$-measure question.

   Under certain mixing properties it known that if the `natural' measure sum diverges then both the recurrent and shrinking target sets are of full $\mu$-measure.  Indeed, if ``mixing'' is strong enough then is relatively straightforward  to obtain a quantitative strengthening of the full measure statement for  the shrinking target problem. This we briefly describe, mainly to motivate the problem for the  recurrent set but also for the sake of completeness. Note that for shrinking targets we interested in points $x \in X$ whose orbit $T^n(x)$  ``hits'' the balls $B_n:=B(x_0, \psi(n))$ an infinite number of times. It is worth emphasizing that the  centres $x_0$ of the balls  $B_n$ are fixed.  Now,  given $ N \in \N$ and $x \in X$,   consider the counting function
\begin{equation} \label{countdefst}
W(x,N; T, \psi) := \# \big\{ 1\le n \le   N :    T^nx \in B(x_0, \psi(n))  \} \, .
\end{equation}
Then under the assumption that $T$ is exponentially mixing with respect to $\mu$ (see \S\ref{setup} for the definition) we have the following statement.  It  tells us that if a `natural'  measure sum  diverges  then for  $\mu$--almost all $x \in X$  the orbit  `hits' the target sets $B_n$ the `expected' number of times.

\begin{theorem}\label{gencountthm}
Let $(X,\mathcal{A},\mu,T)$ be a measure-preserving dynamical system  and  suppose that $T$ is exponentially mixing with respect to $\mu$.  Let $\psi:\N\to\R_{\ge 0}$ be a  real, positive function.  Then, for any given $\varepsilon>0$, we have that
\begin{equation} \label{def_Psist}
W(x,N; T, \psi)=\Phi(N)+O\left(\Phi^{1/2}(N) \ (\log\Phi(N))^{3/2+\varepsilon}\right)
\end{equation}
\noindent for $\mu$-almost all $x\in X$, where
$$\Phi(N):=\sum_{n=1}^N \mu(B_n) \,  .
$$
\end{theorem}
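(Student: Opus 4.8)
The plan is to write the counting function as an ergodic sum of indicator functions and feed the resulting second moment into a classical quantitative Borel--Cantelli (variance) lemma, the exponential mixing hypothesis supplying the required ``quasi-independence on average''. Write $B_n := B(x_0,\psi(n))$ and set $f_n := \mathbf 1_{B_n}\circ T^n = \mathbf 1_{T^{-n}B_n}$, so that $W(x,N;T,\psi) = \sum_{n=1}^N f_n(x)$ and, by $T$-invariance of $\mu$, $\int_X f_n\,d\mu = \mu(B_n)$, whence $\int_X W(x,N;T,\psi)\,d\mu = \Phi(N)$. Estimate \eqref{def_Psist} is then precisely the output of the G\'al--Koksma / Schmidt / Philipp variance lemma: if $(f_n)$ are non-negative measurable functions on a probability space with $\phi_n := \int f_n$, $\Phi(N) := \sum_{n\le N}\phi_n$, and there is a constant $C$ with
\[
\int_X\Bigl(\sum_{n=M+1}^N(f_n-\phi_n)\Bigr)^2 d\mu \;\le\; C\bigl(\Phi(N)-\Phi(M)\bigr)\qquad(0\le M<N),
\]
then for every $\varepsilon>0$ and $\mu$-almost every $x$ one has $\sum_{n\le N}f_n(x) = \Phi(N) + O\bigl(\Phi^{1/2}(N)(\log\Phi(N))^{3/2+\varepsilon}\bigr)$ (and $\sum_n f_n(x)$ is a.e.\ bounded when $\Phi$ is bounded, which is the convergence Borel--Cantelli case). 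So everything reduces to this second-moment bound.

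Expanding the square, it suffices to prove $\sum_{M<m<n\le N}\bigl(\int_X f_m f_n\,d\mu - \mu(B_m)\mu(B_n)\bigr)\ll \Phi(N)-\Phi(M)$, the diagonal contributing at most $\sum_{M<n\le N}\mu(B_n) = \Phi(N)-\Phi(M)$. For $m<n$, $T$-invariance gives $\int_X f_m f_n\,d\mu = \mu\bigl(B_m\cap T^{-(n-m)}B_n\bigr)$, and the form of exponential mixing from \S\ref{setup}, applied (after a regularisation of the indicator functions) to $\mathbf 1_{B_m}$ and $\mathbf 1_{B_n}$, yields
\[
\bigl|\mu\bigl(B_m\cap T^{-k}B_n\bigr) - \mu(B_m)\mu(B_n)\bigr| \;\ll\; \theta^{\,k}\,\mu(B_n)\qquad(k=n-m\ge 1)
\]
for some fixed $\theta\in(0,1)$. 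The crucial feature here is that the mixing estimate is used asymmetrically, only one of the two functions entering through a strong (Lipschitz or BV) norm and the other through its $L^1$-norm $\|\mathbf 1_{B_n}\|_1 = \mu(B_n)$ --- this is the kind of estimate a transfer-operator spectral gap provides, and it is what keeps the error proportional to $\Phi$ rather than merely to $N$. Summing the geometric series, $\sum_{M<m<n\le N}\theta^{n-m}\mu(B_n) \le \tfrac{\theta}{1-\theta}\sum_{M<n\le N}\mu(B_n) = \tfrac{\theta}{1-\theta}(\Phi(N)-\Phi(M))$, which gives the required bound and hence the theorem.

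The one delicate point --- and the main obstacle --- is the regularisation, since indicators of balls are not Lipschitz. One sandwiches $g_n^-\le\mathbf 1_{B_n}\le g_n^+$ by Lipschitz functions with $\|g_n^{\pm}\|_{\mathrm{Lip}}\ll\delta^{-1}$ and $\int_X(g_n^+-g_n^-)\,d\mu\ll\delta$, the latter being the $\mu$-measure of a $\delta$-shell about $\partial B_n$, and then optimises $\delta=\delta(k)$ against the factor $\theta^{k}\delta^{-1}$ produced by the mixing bound (only the function carried through the strong norm needs regularising; the other keeps its exact $L^1$-mass). The point to check is that the surplus error terms so produced remain $\ll\Phi(N)-\Phi(M)$: retaining the $L^1$-factor $\mu(B_n)$ as above and taking $\delta(k)=\theta^{ck}$ for a small $c>0$ turns them into a convergent geometric series in $k$ times $\sum_n\mu(B_n)$, so nothing is lost. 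In dimension one this step is essentially free, as $\|\mathbf 1_{B}\|_{\mathrm{BV}}=2$ for every interval $B$ and no regularisation is needed; in the higher-dimensional version it works for the same reason, the relevant boundary shells having $\mu$-measure $O(\delta)$. With the second-moment bound established, the quantitative Borel--Cantelli lemma quoted above completes the proof.
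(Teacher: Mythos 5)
Your proposal is correct and follows essentially the same route as the paper: feed $f_n=\chi_{T^{-n}B_n}$ into the quantitative Borel--Cantelli (variance) lemma, use $T$-invariance to write $\int f_mf_n\,d\mu=\mu\big(B_m\cap T^{-(n-m)}B_n\big)$, and invoke exponential mixing to get the pairwise quasi-independence on average, the geometric series in $n-m$ keeping the error $\ll\Phi(N)-\Phi(M)$. The only divergence is your Lipschitz regularisation step: under the paper's definition of exponential mixing (stated directly for a ball $B$ and an arbitrary measurable set $F$, with error $O(\gamma^n)\mu(F)$) this is unnecessary, though your version is a reasonable extra argument if one instead assumes decay of correlations for Lipschitz/BV observables.
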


We include a short proof in \S\ref{mechCR}.  Within the context of the above theorem, it is in fact  possible to replace the exponentially mixing assumption by the weaker notion of \s-mixing (short for summable mixing) -- see \cite[\S1.1]{LI2023108994}  for the more general statement and for connections to other works.   A simple consequence of Theorem~\ref{gencountthm} is that  $\lim_{N \to \infty} W(x,N;T,\psi) = \infty  $ for  $\mu$--almost all $x \in X$ if the measure sum $ \Phi := \lim_{N \to \infty} \Phi(N) $ diverges and is finite otherwise.  The upshot is that the theorem implies  the following zero-full measure criterion.

\begin{theorem} \label{STcountcor}
	Let $(X,\mathcal{A},\mu,T)$ be a measure-preserving dynamical system  and  suppose that $T$ is exponentially mixing with respect to $\mu$.   Let $\psi:\N\to\R_{\ge 0}$ be a  real, positive function.  Then
\begin{eqnarray}\label{appdiv}
		\mu\big(W(T,\psi)\big)=
		\begin{cases}
			0 &\text{if}\ \  \sum_{n=1}^\infty \mu\big(B_n\big)<\infty\\[2ex]
			1 &\text{if}\ \ \sum_{n=1}^\infty \mu\big(B_n\big)=\infty.
		\end{cases}
	\end{eqnarray}
\end{theorem}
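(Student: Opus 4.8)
The plan is to obtain both halves of the dichotomy directly from Theorem~\ref{gencountthm}, with the convergence half needing only the elementary Borel--Cantelli lemma; in this sense the statement is a corollary, and the real content lies in Theorem~\ref{gencountthm} itself.

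\emph{The convergence case.} Assume $\sum_{n=1}^\infty\mu(B_n)<\infty$, where $B_n=B(x_0,\psi(n))$. By definition of $W(T,\psi)$, a point $x$ lies in $W(T,\psi)$ precisely when it belongs to infinitely many of the sets $E_n:=T^{-n}B_n=\{x\in X:T^nx\in B_n\}$, that is $W(T,\psi)=\limsup_{n\to\infty}E_n$. Since $T$ is measure preserving, $\mu(E_n)=\mu(B_n)$, so $\sum_n\mu(E_n)<\infty$, and the first Borel--Cantelli lemma gives $\mu\big(W(T,\psi)\big)=0$. Note that this step uses neither mixing nor Theorem~\ref{gencountthm}.

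\emph{The divergence case.} Assume $\sum_{n=1}^\infty\mu(B_n)=\infty$, i.e.\ $\Phi(N)\to\infty$ as $N\to\infty$, and fix $\varepsilon>0$. Since $t^{-1/2}(\log t)^{3/2+\varepsilon}\to 0$ as $t\to\infty$, the error term in \eqref{def_Psist} is $o\big(\Phi(N)\big)$, so Theorem~\ref{gencountthm} yields, for $\mu$-almost every $x\in X$,
\[
W(x,N;T,\psi)=\Phi(N)\,\big(1+o(1)\big)\longrightarrow\infty\qquad(N\to\infty).
\]
For any such $x$ the orbit $(T^nx)_{n\ge 1}$ lies in $B_n$ for infinitely many $n$, hence $x\in W(T,\psi)$; therefore $\mu\big(W(T,\psi)\big)=1$.

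The one point deserving attention is the precise reading of Theorem~\ref{gencountthm}: it should be understood as providing, for $\mu$-a.e.\ $x$, a single constant $C_x$ with $\big|W(x,N;T,\psi)-\Phi(N)\big|\le C_x\,\Phi^{1/2}(N)(\log\Phi(N))^{3/2+\varepsilon}$ valid for all $N$ simultaneously, which is exactly what legitimises passing to the limit $W(x,N;T,\psi)\to\infty$ in the divergence case. Granting this, neither case presents any genuine obstacle — the whole difficulty has been absorbed into Theorem~\ref{gencountthm}.
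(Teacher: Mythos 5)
Your proposal is correct and follows essentially the same route as the paper: the divergence half is read off from Theorem~\ref{gencountthm} since the error term is $o(\Phi(N))$, and the convergence half uses measure preservation ($\mu(E_n)=\mu(B_n)$, cf.\ \eqref{invst}) together with the convergent Borel--Cantelli Lemma, exactly as the paper indicates. Your remark about interpreting the $O$-term as a single a.e.\ constant $C_x$ valid for all $N$ is the right reading and causes no issue.
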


\medskip

With various additional assumptions  on the  measure-preserving dynamical system, analogues of Theorem~\ref{STcountcor} for the recurrent set $R(T,\psi) $  have been established in numerous works  -- see \cite{Hussain2020DynamicalBL} and references within. It is worth noting that if one of the additional assumptions is that $\mu$ is $\delta$-Ahlfors regular (an assumption in \cite{Hussain2020DynamicalBL} and indeed the numerous works cited within)
then with reference to \eqref{appdiv} the measure of the ball $ \mu (B_n) $ is comparable to $\psi^{\delta}(n)$ and conclusion of the analogous zero-full measure criterion for $R(T,\psi) $ reads (as indeed it does in \cite{Hussain2020DynamicalBL}) as follows:
\begin{eqnarray}\label{HLSW}
		\mu\big(R (T,\psi)\big)=
		\begin{cases}
			0 &\text{if}\ \  \sum_{n=1}^\infty \psi^\delta(n)<\infty\\[2ex]
			1 &\text{if}\ \ \sum_{n=1}^\infty \psi^\delta(n)=\infty.
		\end{cases}
	\end{eqnarray}

\noindent  The papers \cite{Hussain2020DynamicalBL} and \cite{LI2023108994} are to some extent  the catalyst to this work. 
To the best of our knowledge, analogues of the stronger counting statement corresponding to  Theorem~\ref{gencountthm} seem to be rather thin in the literature
\footnote{Though the  recent   work of Persson \cite{Persson}, building upon earlier joint work  \cite{MR4675967},  aims to establish such a statement for maps of the unit interval it is somewhat different in nature.   Moreover,  with reference to \eqref{HLSW}, the assumptions made  in \cite{Persson} exclude the ``critical'' shrinking rate corresponding to $\psi(n) = n^{-\delta} $.  More precisely, it is required that  $\psi^\delta(n) >  (\log n)^{4+\epsilon}/n $ as well as other growth conditions. Subsequently, his work (with the various growth conditions) was extended by Sponheimer \cite{Sponheimer} to  more general dynamical systems including Axiom A diffeomorphisms. }  
This paper is a first  step attempting to address this imbalance.  With this is mind,   given $ N \in \N$ and $x \in X$,   consider the counting function
\begin{equation} \label{countdef}
R(x,N)=R(x,N; T, \psi) := \# \big\{ 1\le n \le   N :    d(T^nx, x) < \psi(n) \}.
\end{equation}
As alluded to in the definition, we will often simply write $R(x,N)$ for $R(x,N; T, \psi)$ since the other dependencies will be clear from the context and are usually fixed.  It is easily seen that the convergent  statement \eqref{HLSW} is equivalent to saying that if the sum converges, then  $\lim_{N \to \infty} R(x,N)  $ is finite  for  $\mu$--almost all $x \in X$ and there is nothing more to say in terms of the behaviour of counting function from a $\mu$--almost all point of view.  On the other hand, if the sum diverges, then  $\lim_{N \to \infty} R(x,N)$ is infinite    for  $\mu$--almost all $x \in X$ and it is both natural and desirable to attempt to  understand the asymptomatic behaviour of $  R(x,N) $  akin to  the conclusion of  Theorem~\ref{gencountthm} for the shrinking target  counting function. So what would  we expect the asymptomatic behaviour to look like for the recurrence counting function?    To answer this, it is beneficial to first understand the presence of the measure sum $\Phi(N)$ in Theorem~\ref{gencountthm}. In short, given $
W( T, \psi)$,  for each $n\in \N $ let
\begin{eqnarray}
  E_n   & :=  & \big\{ x \in X : d(T^nx, x_0) < \psi(n) \big\} \nonumber \\[1ex]
  & = & \big\{ x \in X :  T^nx \in B_n:=B(x_0, \psi(n)) \big\}  \ = \ T^{-n} (B_n) \label{souseful} \, .
\end{eqnarray}
By definition,
$
W( T, \psi) =  \limsup_{n \to \infty} E_n  \,
$
and observe that since $T$ is measure preserving \begin{equation} \label{invst} \mu (E_n) = \mu (B_n) \, .  \end{equation}
In other words, the measure sum appearing in Theorem~\ref{gencountthm} which determines the asymptomatic behaviour of the shrinking  target counting function  is nothing more than the measure of the fundamental sets $E_n$  associated with the $\limsup$ set under consideration.  With this in mind,  given $
R( T, \psi)$,  for each $n\in \N $ let
\begin{eqnarray}\label{def_An}
  A_n   & :=  & \big\{ x \in X : d(T^nx, x) < \psi(n) \big\} \nonumber \\[1ex]
  & = & \big\{ x \in X :  T^nx \in B(x, \psi(n)) \big\}   \, .
\end{eqnarray}
By definition,
$
R( T, \psi) =  \limsup_{n \to \infty} A_n  \,  \
$
and it would be reasonable to expect  (of course under suitable assumptions beyond just exponentially mixing) a quantitative  asymptotic statement of the following type: for $\mu$-almost all $x\in X$
$$
\lim_{N \to \infty}   \frac{R(x,N)}{\Phi(N)} \, = \, 1   \quad {\rm where \ }  \quad \Phi(N):=\sum_{n=1}^N \mu(A_n)
$$
and we assume that $\lim_{N \to \infty} \Phi(N)=\infty $.    Ideally, it would be  desirable to obtain the ``optimal''  statement in line with \eqref{def_Psist}; that is,  for any given $\varepsilon>0$
\begin{equation} \label{dreamrec}R(x,N) =\Phi(N)+O\left(\Phi^{1/2}(N) \ (\log\Phi(N))^{3/2+\varepsilon}\right)  \, ,
\end{equation}
for $\mu$-almost all $x\in X$.

\medskip

In this paper we establish \eqref{dreamrec} for a class of piecewise linear maps.  The main difficulty lies in the fact the sets $A_n$ associated with  $ R( T, \psi)$  cannot be expressed as the pre-image of ``nice'' sets let alone a ball as in the case of the sets $E_n$ associated with  $ W( T, \psi)$ -- see \eqref{souseful}.   This means that we are not able to directly exploit ``measure preserving'' and ``exponentially mixing'' as in the shrinking target problem.  This will be made absolutely evident in  \S\ref{mechCR} when we provide the proof of Theorem~\ref{gencountthm}.   At this stage, it is simply  worth noting  that due to \eqref{souseful} and the fact that $T$ is measure preserving,  the convergent part of Theorem~\ref{STcountcor} is a straightforward consequence of \eqref{invst}  and the (convergent) Borel-Cantelli Lemma.  For the recurrence problem,  establishing the analogous convergent part as in \eqref{HLSW} is not as direct even with extra assumptions.

\section{The setup and statement of results} \label{setup}

For the sake of clarity and familiarity,  we start with the setup in dimension one.  Let $T:[0,1]\to [0,1]$ be a piecewise linear map sending each interval of linearity to $[0,1]$. Let $J_m$ be an interval of linearity for $T^m$; that is, $J_m \in \PP_m $ where $\PP_m $ is the collection of cylinder sets of order $m$ that partition $[0,1]$.    
Let $K_{J_m} = (T^m)'|_{J_m}$. Then $T^m|_{J_m}$ is a similarity with dilatation factor $|K_{J_m}|$. In particular,  $(T^m)'(y) =K_{J_m} $ for any $y \in J_m$.
Moreover, the following are well known facts of this setup:

\begin{enumerate}
    \item[\textbf{[F1]}] $T$ is expanding. Thus there exists a constant $\lambda > 1 $ such that $|T'(x)|  \ge \lambda $ for all $ x \in [0,1]$. It follows that for any $m \in \N$
    \begin{equation}  \label{expanding constant}
   | K_{J_m} |  \ge \lambda^m \, .  \end{equation}
    \item[\textbf{[F2]}] $T$ is a measure preserving transformation with respect to one-dimensional Lebsegue measure $\mu$; i.e. $\mu$ is a $T$-invariant probability measure. For any measurable set $F \subseteq J_m $, due to linearity, we have that
 $$
 \mu(T^m(F) )    =  K_{J_m}  \mu(S).
 $$
    \\
    \item[\textbf{[F3]}] $T$ is a exponentially mixing  with respect to $\mu$; i.e. there exists a constant $0<\gamma<1$ such that for any $ n \in \N$, and  any ball $ B \in [0,1]$ and measurable set $F\in [0,1]$ \,
\begin{equation}\label{mixing1d}
\mu(B\cap T^{-n}(F))=\mu(B)\mu(F)+O(\gamma^n)\mu(F)  \, .
\end{equation}
\end{enumerate}

\medskip

\begin{remark} 
Although well known, the  above exponential mixing fact  [F3] will also follow as a consequence of a more general mixing statement  established in this paper (namely, Proposition~\ref{propositionexpmix} appearing in \S\ref{prelimhd}).
\end{remark} 

The following constitutes our main one dimensional result.

\begin{theorem} \label{1dimthm}
    Let $T:[0,1]\to [0,1]$ be a piecewise linear map sending each interval of linearity to $[0,1]$. Let $\psi:\N\to\R_{\ge 0}$ be a  real, positive function. Then, for any given $\varepsilon>0$, we have that
\begin{equation} \label{def_Psi}
R(x,N)=\Psi(N)+O\left(\Psi^{1/2}(N) \ (\log\Psi(N))^{3/2+\varepsilon}\right)
\end{equation}
\noindent for $\mu$-almost all $x\in X$, where
$$\Psi(N):= 2 \sum_{n=1}^N  \psi(n) \,  .
$$
\end{theorem}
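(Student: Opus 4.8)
The plan is to realise $R(x,N)$ as a sum of ``almost independent'' indicator functions and then apply a quantitative Borel--Cantelli / Erd\H{o}s--R\'enyi type law. Write $R(x,N) = \sum_{n=1}^N \mathbf{1}_{A_n}(x)$ where $A_n = \{x : d(T^nx,x) < \psi(n)\}$. The heart of the matter is a good understanding of $\mu(A_n)$ and of the pairwise correlations $\mu(A_n \cap A_m)$. For the first moment, I would fix a cylinder $J_n \in \PP_n$ of length $|J_n| = |K_{J_n}|^{-1}$: on $J_n$ the map $T^n$ is an affine bijection onto $[0,1]$, so the condition $|T^nx - x| < \psi(n)$ describes (for $|K_{J_n}|$ large, which holds by \textbf{[F1]} once $n$ is large) a subinterval of $J_n$ whose length, by a direct computation with the affine map $y \mapsto K_{J_n}(y - a_{J_n})$, equals $2\psi(n)/(|K_{J_n}| \mp 1) = 2\psi(n)|J_n|(1 + O(|K_{J_n}|^{-1}))$. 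Summing over the $|\PP_n|$ cylinders and using $\sum_{J_n} |J_n| = 1$ gives
\[
\mu(A_n) = 2\psi(n) + O(\lambda^{-n}\psi(n)) = 2\psi(n)(1 + O(\lambda^{-n})).
\]
Hence $\sum_{n\le N}\mu(A_n) = \Psi(N) + O(1)$, which already explains the shape of $\Psi(N) = 2\sum_{n\le N}\psi(n)$ and shows the error from replacing $\sum\mu(A_n)$ by $\Psi(N)$ is absorbed into the stated $O$-term.

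The second, and decisive, step is the quasi-independence estimate: for $m < n$ (say with $n - m$ not too small),
\[
\mu(A_m \cap A_n) = \mu(A_m)\,\mu(A_n) + (\text{error}).
\]
Here is where the piecewise-linear structure and exponential mixing \textbf{[F3]} enter, but the obstacle flagged in the introduction is real: $A_n$ is \emph{not} a preimage $T^{-n}(\text{ball})$, so \eqref{mixing1d} does not apply directly. The fix is to condition on cylinders. On each $J_m \in \PP_m$, the set $A_m \cap J_m$ is (up to the small multiplicative correction above) an interval $I_{J_m}$ with $\mu(I_{J_m}) \approx 2\psi(m)|J_m|$, and crucially $T^n$ restricted to a cylinder $J_{n} \subset J_m$ of order $n$ is still affine onto $[0,1]$. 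Writing $A_n$ restricted to such a sub-cylinder as (essentially) a union of preimages of short intervals, one can estimate $\mu(A_m \cap A_n) = \sum_{J_m} \mu(I_{J_m} \cap A_n)$ and, on each $J_m$, apply a mixing/distortion argument at scale $n$ to decouple the ``$m$-scale interval'' $I_{J_m}$ from the ``$n$-scale condition''. Because $T^n/T^m$ is affine with dilatation $\ge \lambda^{n-m}$ on each intermediate cylinder, the comparison produces
\[
\mu(A_m \cap A_n) = 4\psi(m)\psi(n) + O\!\big(\psi(m)\psi(n)\,\lambda^{-(n-m)}\big) + O\!\big(\psi(n)\,\lambda^{-?}\big),
\]
with the precise secondary error being the routine-but-delicate part. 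The upshot I want is
\[
\sum_{m,n \le N} \big(\mu(A_m\cap A_n) - \mu(A_m)\mu(A_n)\big) \ll \Psi(N),
\]
i.e. the variance of $\sum_{n\le N}\mathbf{1}_{A_n}$ is $O(\Psi(N))$.

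Given the first-moment asymptotic and the variance bound, the conclusion follows from a standard quantitative Borel--Cantelli lemma: if $f_N = \sum_{n\le N}\mathbf{1}_{A_n}$ has $\mathbb{E}f_N = \Psi(N) + O(1)$ and $\mathrm{Var}(f_N) \ll \Psi(N)$ (more precisely, $\mathbb{E}(f_N - f_M)^2 \ll \Psi(N) - \Psi(M)$ for $M < N$), then $f_N(x) = \Psi(N) + O(\Psi^{1/2}(N)(\log\Psi(N))^{3/2+\varepsilon})$ for $\mu$-a.e.\ $x$ and all $N$. This is exactly the mechanism used for Theorem~\ref{gencountthm}, whose proof is promised in \S\ref{mechCR}; I would quote that abstract statement (a Gal--Koksma / Sprind\v{z}uk-type strong law) verbatim and simply verify its hypotheses. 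I expect the main obstacle to be the correlation estimate $\mu(A_m \cap A_n)$ — specifically, controlling the error uniformly in $m$ when $n - m$ is bounded, since there \eqref{mixing1d} gives nothing and one must instead argue by hand that a short interval at scale $\lambda^{-m}$ meets the scale-$\lambda^{-n}$ set $A_n$ in the expected proportion, using the self-similar combinatorics of the $\PP_\bullet$ partitions and absorbing the diagonal-ish terms ($n$ close to $m$) into the $O(\Psi(N))$ budget by the crude bound $\mu(A_m\cap A_n) \le \min(\mu(A_m),\mu(A_n))$.
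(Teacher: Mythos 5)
Your overall architecture is the same as the paper's: write $R(x,N)=\sum_{n\le N}\chi_{A_n}(x)$, feed the variance hypothesis into the quantitative Borel--Cantelli lemma (Lemma~\ref{ebc}), and obtain that hypothesis from a first-moment estimate ($\sum_{n\le N}\mu(A_n)=\Psi(N)+O(1)$, Proposition~\ref{measAnSV}) together with pairwise quasi-independence on average (Proposition~\ref{measinterSV}), the latter by conditioning on order-$m$ cylinders, using that $J_m\cap A_m$ is an interval of length at most $2\psi(m)/|K_{J_m}-1|$ (Lemma~\ref{lemmaIJm}) and decoupling via exponential mixing at time $n-m$. So the route is the right one; the gap is that the step you yourself call decisive --- the correlation estimate --- is asserted (``routine-but-delicate'', with an unspecified exponent) rather than proved, and this is precisely where the work lies. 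What is missing is the device that makes the decoupling legitimate even though $A_n$ is not a preimage of a ball: subdivide $I_{J_m}=J_m\cap A_m$ into intervals $I$ of length at most $\epsilon_n\psi(n)$ for a \emph{summable} sequence $\epsilon_n$ (the paper takes $\epsilon_n=n^{-2}$), use the triangle-inequality sandwich (Lemma~\ref{usefullemmahigherdim}) on each $I$ to trap $I\cap A_n$ inside $I\cap T^{-n}B\bigl(z_I,(1+\epsilon_n)\psi(n)\bigr)$, push forward by $T^m$, apply mixing (Proposition~\ref{propositionexpmix}) at time $n-m$, and sum over $J_m\in\PP_m$ using $\sum_{J_m}|K_{J_m}|^{-1}=1$ and $|K_{J_m}-1|^{-1}\ll\gamma^m$. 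This yields $\mu(A_m\cap A_n)\le 2\mu(A_m)\psi(n)+O(\epsilon_n\mu(A_m)\psi(n))+O(\gamma^{n-m}\psi(n))+O(\gamma^n\epsilon_n^{-1})$, whose double sum is $\bigl(\sum_n\mu(A_n)\bigr)^2+O\bigl(\sum_n\mu(A_n)\bigr)+O(1)$; your guessed error structure is consistent with this, but the $(1+\epsilon_n)$/summability bookkeeping is the actual content. Incidentally, your worry about bounded gaps $n-m$ is unnecessary: the mixing error $O(\gamma^{n-m})\mu(F)$ is usable for every gap since $\sum_{m<n}\gamma^{n-m}$ converges, so the crude fallback $\mu(A_m\cap A_n)\le\min(\mu(A_m),\mu(A_n))$ is harmless but not needed.

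A second, smaller, point: your first-moment computation claims $J_n\cap A_n$ is an interval of length \emph{equal} to $2\psi(n)/(|K_{J_n}|\mp1)$. It is an interval centred at the fixed point of the affinely extended branch, and that length is only an upper bound: the interval overhangs $J_n$ exactly when the branch fixed point lies within roughly $\psi(n)$ of $0$ or $1$, and the overhanging piece is in general not recovered in the neighbouring cylinder. Aggregated over cylinders this costs an error of order $\psi(n)^2$ (for the doubling map on $[0,1]$ with the Euclidean metric one genuinely gets $\mu(A_n)\to 2\psi-\psi^2$ for constant $\psi$), not the $O(\lambda^{-n}\psi(n))$ you state; for slowly decaying $\psi$ (say $\psi(n)=1/\log n$) the cumulative discrepancy $\sum_{n\le N}\psi(n)^2$ exceeds the permitted $\Psi^{1/2}(N)(\log\Psi(N))^{3/2+\varepsilon}$, so the equality is not a cosmetic issue. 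The paper avoids per-cylinder exactness by computing $\mu(A_n)$ with the same partition-plus-mixing scheme used for the correlations (tacitly using that every ball of radius $\psi(n)$ has measure $2\psi(n)$, i.e.\ the toral/periodic reading); if you keep your direct cylinder computation you must either adopt that convention and handle the overhanging cylinders explicitly, or switch to the mixing argument.
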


\vspace*{3ex}

 In higher dimensions, we consider the following generalization of the above set up.

 \begin{itemize}
   \item Let $T:[0,1]^d\to [0,1]^d$ be a piecewise linear map such that each domain of linearity $J_1$ is a coordinate-parallel rectangle that $T$ sends to $X=[0,1]^d$ and such that $T'|_{J_1}$ is a diagonal matrix.   For general  $m \in \N$, the rectangle of linearity  $J_m  \in \PP_m $  where $ \PP_m $ is the collection of cylinder sets $J_m$ of order $m$ that partition $X=[0,1]^d$. Throughout,   we use the standard notation $D_xT$ to mean the derivative of $T$ at a point $x \in X$ but since
$D_xT$ is the same for any $x\in J_m$, we simply write $T'|_{J_m}$. \\

\item Let $ \mu$ be $d$-dimensional Lebesgue measure. \\ 

 \item  For each $i=1,\ldots,d$ let $\psi_i:\N\to\R_{\ge 0}$ be a  real, positive function and let \begin{equation} \label{profpsi}\psi(n) := \prod_{i=1}^d \psi_i(n)  \, . \end{equation}
 In turn, for   $ N \in \N$ and $x:=\{x_1, \ldots, x_d \}  \in X$,   consider the counting function
\begin{equation} \label{countdefhd}
R(x,N) := \# \Big\{ 1\le n \le   N :     \dist(x_i,T^n(x_i)) \leq \psi_i(n) \all \  1 \le i \le d \Big\} .
\end{equation}
 \end{itemize}

\noindent Examples of well know dynamical systems satisfying our setup include the tent map, the L\"{u}roth map and expanding, diagonal integer matrix transformations of the $d$-dimesnional torus $\T^d$. The following constitutes our main result.

\begin{theorem} \label{ddimthmhd}
    Let $T:[0,1]^d\to [0,1]^d$ and $\psi_i:\N\to\R_{\ge 0}$ be as above. Then, for any given $\varepsilon>0$, we have that
\begin{equation} \label{def_Psihd}
R(x,N)=\Psi(N)+O\left(\Psi^{1/2}(N) \ (\log\Psi(N))^{3/2+\varepsilon}\right)
\end{equation}
\noindent for $\mu$-almost all $x\in X$, where
$$\Psi(N):= 2^d \sum_{n=1}^N  \psi(n) \,  .
$$
\end{theorem}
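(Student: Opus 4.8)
\emph{Strategy.} The plan is to write $R(x,N)=\sum_{n=1}^{N}\mathbf 1_{A_n}(x)$, where $A_n$ is as in \eqref{def_An}--\eqref{countdefhd}, and to deduce \eqref{def_Psihd} from a quantitative Borel--Cantelli lemma of the usual kind: if $(A_n)$ are measurable sets with $\Phi(N):=\sum_{n\le N}\mu(A_n)$ satisfying the ``quasi-independence on average'' bound
\[
\sum_{M<m,\,n\le N}\bigl(\mu(A_m\cap A_n)-\mu(A_m)\mu(A_n)\bigr)\ \ll\ \sum_{M<n\le N}\mu(A_n)\qquad(M<N),
\]
then $\sum_{n\le N}\mathbf 1_{A_n}(x)=\Phi(N)+O\!\bigl(\Phi^{1/2}(N)(\log\Phi(N))^{3/2+\varepsilon}\bigr)$ for $\mu$-a.e.\ $x$ and every $\varepsilon>0$. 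So the theorem reduces to two things: (a) $\mu(A_n)=2^d\psi(n)$ up to an error summable in $n$, whence $\Phi(N)=\Psi(N)+O(1)$; and (b) the displayed variance bound for the $A_n$. Because each domain of linearity is a coordinate rectangle and $T'|_{J}$ is diagonal, on each cylinder the event defining $A_n$ factors across the $d$ coordinates, so (a) and (b) are, up to bookkeeping, $d$-fold products of one-dimensional estimates; I describe them as if $d=1$ and form products at the end.

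\emph{Step (a): the measure of $A_n$.} Fix $n$ and $J\in\PP_n$: the map $T^n|_{J}$ is affine with linear part $\operatorname{diag}(K_{1,J},\dots,K_{d,J})$, $|K_{i,J}|\ge\lambda^{n}$, and has a unique fixed point $p_J$ in the affine hull of $J$; in the $i$-th coordinate the condition $\dist(x_i,(T^nx)_i)\le\psi_i(n)$ becomes $|x_i-p_{i,J}|\le\psi_i(n)/\bigl|\,|K_{i,J}|\mp1\,\bigr|$ (the sign depending on whether the $i$-th branch of $T^n$ on $J$ preserves or reverses orientation), so $A_n\cap J$ is the intersection with $J$ of a coordinate box centred at $p_J$. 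Summing $\mu(A_n\cap J)$ over $J\in\PP_n$, using $\sum_J\mu(J)=1$, the bound $|K_{i,J}|\ge\lambda^{n}$ to absorb the cylinders whose box meets $\partial J$, and an elementary count of which fixed points occur in which cylinder (plus the wrap-around near $\partial X$), yields $\mu(A_n)=2^d\psi(n)$ up to a summable error --- indeed exactly for the model examples, where $x\mapsto T^nx-x$ is a finite-degree, hence Lebesgue-preserving, self-map of $X$, equal in the torus case to $x\mapsto(A^n-I)x$, so that $\mu(A_n)$ is the measure $2^d\psi(n)$ of the target box $\prod_i(-\psi_i(n),\psi_i(n))$.

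\emph{Step (b): quasi-independence, the crux.} Fix $m<n$ and decompose $A_m=\bigsqcup_{J\in\PP_m}b_J$ with $b_J:=A_m\cap J$, the small box about the fixed point $p_J$ of $T^m|_{J}$. The affine map $T^m|_{J}$ carries $b_J$ onto a coordinate box $B_J:=T^m(b_J)$, still centred at $p_J$ but of macroscopic side lengths $\asymp\psi_i(m)$, with $\mu(b_J)=\mu(J)\mu(B_J)$. For $x\in b_J$ we have $|x_i-p_{i,J}|\le\psi_i(m)\lambda^{-m}$, so, \emph{provided $\psi_i(n)\gg\psi_i(m)\lambda^{-m}$}, up to a negligible change of the radii the event ``$x\in A_n$'' coincides on $b_J$ with ``$T^{n-m}(T^mx)\in R_J$'', where $R_J:=\prod_i(p_{i,J}-\psi_i(n),\,p_{i,J}+\psi_i(n))$ has measure $2^d\psi(n)$. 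Changing variables through $T^m|_{J}$ and applying the general exponential mixing of Proposition~\ref{propositionexpmix} to the two coordinate rectangles $B_J$ and $R_J$ gives
\[
\mu(A_n\cap b_J)=\mu(J)\,\mu\!\bigl(B_J\cap T^{-(n-m)}R_J\bigr)+(\mathrm{err})=\mu(J)\bigl(\mu(B_J)+O(\gamma^{\,n-m})\bigr)2^d\psi(n)+(\mathrm{err}).
\]
Since $\mu(J)\mu(B_J)=\mu(b_J)=\mu(A_m\cap J)$, summing over $J\in\PP_m$ yields $\mu(A_m\cap A_n)=2^d\psi(n)\mu(A_m)+O(\gamma^{\,n-m}\psi(n))+(\mathrm{err})$, hence $\mu(A_m\cap A_n)-\mu(A_m)\mu(A_n)=O(\gamma^{\,n-m}\psi(n))+(\mathrm{err})$; summing over $1\le m<n\le N$ and collapsing the geometric series in $n-m$ bounds the double sum by $O\!\bigl(\sum_{n\le N}\psi(n)\bigr)=O(\Psi(N))$, which is exactly (b).

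\emph{Main obstacle and remaining points.} The genuinely hard part is the range excluded by the proviso above: the near-diagonal pairs $m<n$ with $n-m$ too small (relative to $m$ and to $\log 1/\psi_i(n)$) for the reduction ``recurrence to $x$'' $\leadsto$ ``recurrence to $p_J$'' to be valid and for mixing at time $n-m$ to have taken effect. Here one needs a true upper bound for $\mu(A_m\cap A_n)$ --- not merely the trivial $\le\mu(A_n)$ --- describing how the boxes making up $A_n$ are positioned relative to those making up $A_m$; this is where Proposition~\ref{propositionexpmix}, together with some combinatorics of the periodic points, earns its keep, and where the main work of the proof lies. Subsidiary points: one first truncates the $\psi_i$ at a constant so the boxes lie inside $[0,1]$, checking that this alters $\Psi$ by $O(1)$ in the divergence case; the cylinders whose fixed point is near $\partial J$ or outside $J$, and the wrap-around near $\partial X$, must be tracked carefully in both steps to obtain the exact constant $2^d$; and the convergence case $\sum_n\psi(n)<\infty$ is immediate, as then $\mu(A_n)\asymp\psi(n)$ and the convergence Borel--Cantelli lemma force $R(x,N)=O(1)$ for a.e.\ $x$, consistent with \eqref{def_Psihd}. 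None of the dynamics produces the exponent $3/2+\varepsilon$; that is delivered by the quantitative Borel--Cantelli lemma once the variance bound $\ll\Psi(N)$ of Step (b) is in place.
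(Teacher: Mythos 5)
Your overall reduction is the paper's: write $R(x,N)=\sum_{n\le N}\chi_{A_n}(x)$, feed it into the quantitative Borel--Cantelli lemma (Lemma~\ref{ebc}), and reduce the theorem to the two statements $\Phi(N)=\Psi(N)+O(1)$ and pairwise quasi-independence on average (Propositions~\ref{measAnSV} and~\ref{measinterSV}). But your Step~(b) contains a genuine gap, and it sits exactly where you yourself say ``the main work of the proof lies'': your overlap estimate for $\mu(A_m\cap A_n)$ is only derived under the proviso $\psi_i(n)\gg\psi_i(m)\lambda^{-m}$, and for the complementary (near-diagonal) range of pairs $m<n$ you offer no bound beyond the hope that ``combinatorics of the periodic points'' will supply one. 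Since condition~\eqref{ebc_condition1} must hold for \emph{all} pairs $a\le m<n\le b$, leaving that range open means Proposition~\ref{measinterSV}, and hence the theorem, is not proved by your argument; the proposal is a strategy outline with its hardest step missing.

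The paper shows that this near-diagonal ``obstacle'' in fact dissolves, with no case split and no analysis of periodic points. Instead of approximating the event $\{x\in A_n\}$ on all of $b_J=J_m\cap A_m$ by a single rectangle centred at the fixed point (which is what forces your proviso), one subdivides $I_{J_m}=J_m\cap A_m$ (a rectangle, by Lemma~\ref{lemmaIJm}) into subrectangles $I$ of sidelengths at most $\epsilon_n\psi_i(n)$ and applies the triangle-inequality Lemma~\ref{usefullemmahigherdim} on each piece with centre $z_I$; this localisation is valid for \emph{every} pair $m<n$, at the price of a radius inflation $(1+\epsilon_n)$ and of multiplying the mixing error $C\gamma^{n-m}$ from Proposition~\ref{propositionexpmix} by the number of pieces. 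That number is at most $\prod_{i=1}^d\bigl(1+2C_2\gamma^m\psi_i(m)/(\epsilon_n\psi_i(n))\bigr)$ because the sidelengths of $I_{J_m}$ are at most $2\psi_i(m)/|K^{(i)}_{J_m}-1|\ll\gamma^m\psi_i(m)$; after multiplying by the target measure $\asymp 2^d\psi(n)$ and summing over $J_m\in\PP_m$ (using $\sum_{J_m}|K_{J_m}|^{-1}=1$) one gets
\begin{equation*}
\mu(A_m\cap A_n)\ \le\ 2^d\mu(A_m)\psi(n)+O\bigl(\epsilon_n\mu(A_m)\psi(n)\bigr)+O\bigl(\gamma^{n-m}\psi(n)\bigr)+O\bigl(\gamma^{n}\epsilon_n^{-d}\bigr),
\end{equation*}
the crucial point being that the factor $\gamma^m$ coming from the smallness of the cylinder combines with $\gamma^{n-m}$ from mixing to give $\gamma^n$, which beats $\epsilon_n^{-d}=n^{2d}$; so no lower bound on $n-m$ is ever needed. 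Choosing $\epsilon_n=n^{-2}$ and summing over $a\le m<n\le b$ yields Proposition~\ref{measinterSV}. (The same subdivision-plus-mixing device, applied to a partition of all of $X$ at scale $\epsilon_n\psi_i(n)$, is how the paper proves Proposition~\ref{measAnSV}, in place of your fixed-point count; the truncation effects at cylinder boundaries and at $\partial X$ that you relegate to a remark are exactly what this bookkeeping has to absorb.) If you replace your ``recurrence to $x$ $\leadsto$ recurrence to $p_J$'' reduction by this subdivision step, your outline becomes essentially the paper's proof.
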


\noindent For the sake of completeness,  we point out  that the analogues of the one-dimensional facts [F1] - [F3] are valid within the higher dimensional framework of the theorem and are essential for its proof (see~\S\ref{secproof}).

\medskip 

\begin{remark} \label{worthit}
  It seems that even for the simplest  one dimensional map  $ T: x \to 2x $ modulo one, the theorem appears to be new.  As mentioned in the footnote in \S\ref{bgm}, the related works of Persson \cite{Persson} and  Sponheimer \cite{Sponheimer}, although applicable to a larger class of dynamical systems, impose various  growth conditions on $\psi$. Also, their approach does not provide an error term for the asymptotic behavior of $R(x,N)$ and in the  higher dimensional work of Sponeheimer \cite{Sponheimer}  it is assumed that $\psi_1 =  \ldots = \psi_d$.      When adding  the finishing  touches  to  this paper, we became aware of the work of He \cite{He}.  By imposing  stronger conditions on the underlying dynamical system than those in \cite{Persson,Sponheimer},  He  removes the various growth conditions  and   proves  an asymptotic formula   but  again without an error term.  His  conditions on the dynamical system are less restrictive than ours (so, for example,  it includes the Gauss map) and in higher dimensions, like us, He does not require that $\psi_1 =  \ldots = \psi_d$.  We emphasise that compared to above three works, although the  
   dynamical systems we consider are more restrictive,   we obtain a  stronger asymptotic  statement that is in line with the analogous shrinking target result  and the proof is both transparent and short; again more in line with the shrinking target situation.  
\end{remark}

\section{A mechanism for establishing counting results}  \label{mechCR}

The following  statement~\cite[Lemma~1.5]{Harman} represents an important  tool in the theory of metric Diophantine approximation for establishing counting statements.  It has its bases in the familiar variance method of probability theory and can be viewed as the quantitative form of the (divergence)  Borel-Cantelli Lemma \cite[Lemma~2.2]{BRV}.

\begin{lemma} \label{ebc}
Let $(X,\mathcal{A},\mu)$ be a probability space, let $(f_n)_{n \in \N}$ be a sequence of non-negative $\mu$-measurable functions defined on $X$, and $(c_n)_{n \in \N },\ (\phi_n)_{n  \in \N}$ be sequences of real numbers  such that
$$ 0\leq c_n \leq \phi_n \hspace{7mm} (n=1,2,\ldots).  $$

\noindent 
Suppose that for arbitrary  $a,b \in \N$ with $a <  b$, we have
\begin{equation} \label{ebc_condition1}
\int_{X} \left(\sum_{n=a}^{b} \big( f_n(x) -  c_n \big) \right)^2\mathrm{d}\mu(x)\, \leq\,  C\!\sum_{n=a}^{b}\phi_n
\end{equation}

\noindent for an absolute constant $C>0$. Then, for any given $\varepsilon>0$,  we have
\begin{equation} \label{ebc_conclusion}
\sum_{n=1}^N f_n(x)\, =\, \sum_{n=1}^{N}c_n\, +\, O\left(\Phi(N)^{1/2}\log^{\frac{3}{2}+\varepsilon}\Phi(N)+\max_{1\leq k\leq N}c_k\right)
\end{equation}
\noindent for $\mu$-almost all $x\in X$, where $
\Phi(N):= \sum\limits_{n=1}^{N}\phi_n
$. 
\end{lemma}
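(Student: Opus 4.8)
The plan is to run the classical variance argument (Erd\H{o}s--G\'al / G\'al--Koksma): the $L^2$-bound \eqref{ebc_condition1} is converted to an almost everywhere estimate by applying Chebyshev's inequality and the (convergence) Borel--Cantelli lemma along a geometric subsequence, and by using the Rademacher--Menshov maximal inequality to interpolate between consecutive points of that subsequence. Write $g_n:=f_n-c_n$ and $S_N:=\sum_{n=1}^N g_n$, so that the goal is to show
\[
S_N=O\!\Big(\Phi(N)^{1/2}\big(\log\Phi(N)\big)^{3/2+\varepsilon}+\max_{1\le k\le N}c_k\Big)\qquad\text{for $\mu$-almost every } x.
\]
The case $\lim_N\Phi(N)<\infty$ is disposed of first: \eqref{ebc_condition1} with $a=1$ makes $(S_N)$ Cauchy in $L^2$, a one-line version of the maximal inequality below gives $\sup_N|S_N|<\infty$ a.e., and both terms on the right above are bounded, so \eqref{ebc_conclusion} holds. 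From now on assume $\Phi(N)\to\infty$.

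The first substantial step is a renormalisation. Group $\N$ into consecutive finite blocks $L_1,L_2,\dots$, formed greedily so that $\phi'_j:=\sum_{n\in L_j}\phi_n\ge 1$ for every $j$ and $\phi'_j<2$ unless $L_j$ is forced to be a singleton carrying $\phi_n\ge 1$. Put $G_j:=\sum_{n\in L_j}g_n$ and $c'_j:=\sum_{n\in L_j}c_n\le\phi'_j$; summing \eqref{ebc_condition1} over the indices lying in $L_p\cup\dots\cup L_q$ shows that $(G_j),(c'_j),(\phi'_j)$ again satisfy an estimate of the shape \eqref{ebc_condition1} with the same constant $C$, while $\Phi'(J):=\sum_{j\le J}\phi'_j$ equals $\Phi$ evaluated at the right endpoint of $L_J$ and obeys $J\le\Phi'(J)\to\infty$. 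The point of this step is that the block count up to a given place is now dominated by the value of $\Phi$ there, so that $\log(\text{number of terms})$ can be replaced by $\log\Phi$ in what follows.

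Next I would invoke the Rademacher--Menshov maximal inequality --- obtained by writing each initial segment $\{1,\dots,J\}$ as a disjoint union of at most $\log_2 2J_0$ dyadic blocks and applying Cauchy--Schwarz together with the transferred form of \eqref{ebc_condition1} on each --- in the shape
\[
\int_X\max_{1\le J\le J_0}\Big(\sum_{j\le J}G_j\Big)^2\,\mathrm{d}\mu\;\le\;C_1\,(\log_2 2J_0)^2\,\Phi'(J_0)\;\le\;C_1\big(\log_2 2\Phi'(J_0)\big)^2\,\Phi'(J_0),
\]
using $J_0\le\Phi'(J_0)$. Along the subsequence $J_k:=\min\{J:\Phi'(J)\ge 2^k\}$, Chebyshev's inequality gives
\[
\mu\!\Big(\max_{J\le J_k}\Big|\sum_{j\le J}G_j\Big|>\Phi'(J_k)^{1/2}\big(\log_2 2\Phi'(J_k)\big)^{3/2+\varepsilon}\Big)\;\le\;\frac{C_1}{\big(\log_2 2\Phi'(J_k)\big)^{1+2\varepsilon}}\;\le\;\frac{C_1}{(k+1)^{1+2\varepsilon}},
\]
which is summable in $k$. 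By the (convergence) Borel--Cantelli lemma, for $\mu$-a.e.\ $x$ and all large $k$ one has $\max_{J\le J_k}\big|\sum_{j\le J}G_j\big|\le\Phi'(J_k)^{1/2}(\log_2 2\Phi'(J_k))^{3/2+\varepsilon}$; comparing $\Phi'$ at an arbitrary $J$ with its value at the next subsequence point --- a bounded-ratio comparison except at the large singleton blocks, which I would simply adjoin to the subsequence and handle directly --- upgrades this to $\sum_{j\le J}G_j=O\big(\Phi'(J)^{1/2}(\log\Phi'(J))^{3/2+\varepsilon}\big)$ for every $J$.

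To finish, for arbitrary $N$ let $J(N)$ be the block containing $N$ and write $S_N=\sum_{j\le J(N)-1}G_j+\sum_{n\in L_{J(N)},\,n\le N}f_n-\sum_{n\in L_{J(N)},\,n\le N}c_n$. The first term is the estimate just obtained, with $\Phi'(J(N)-1)\le\Phi(N)$; the last term is at most $\phi'_{J(N)}$, hence $O(1)$ for an ordinary block and $\le\max_{1\le k\le N}c_k$ for a singleton block; and the incomplete-block middle sum I would control by a further Chebyshev plus Borel--Cantelli pass over the blocks, using $\int_X(\sum_{n\in L_J}f_n)^2\,\mathrm{d}\mu=O(1)$ for ordinary blocks and $\int_X f_n^2\,\mathrm{d}\mu\ll\phi_n+\phi_{n+1}$ for a singleton block $L_J=\{n\}$. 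The hard part, where I expect to have to be careful, is pinning down the exponent $3/2+\varepsilon$ on the logarithm: it is the sum of a ``$1$'' coming from the $(\log)^2$ loss in Rademacher--Menshov after taking a square root, and a ``$1/2+\varepsilon$'' needed to make the Chebyshev tails summable over the dyadic scales. A secondary point needing care is the treatment of indices with large $\phi_n$ (hence possibly large $c_n$): this is what forces the term $\max_{1\le k\le N}c_k$ in \eqref{ebc_conclusion} and rests on the elementary bound $\int_X f_n^2\,\mathrm{d}\mu\le\int_X\big(\sum_{m=a}^{b}(f_m-c_m)\big)^2\,\mathrm{d}\mu+O\big((\phi_a+\dots+\phi_b)^2\big)$, valid whenever $n\in[a,b]$, which is a consequence of the non-negativity of the $f_m$.
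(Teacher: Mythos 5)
You should first note that the paper itself gives no proof of Lemma~\ref{ebc}: it is quoted from \cite[Lemma~1.5]{Harman} (Schmidt's variance lemma), so your proposal can only be compared with the standard proof. Your skeleton is exactly that proof: renormalise into consecutive blocks of unit $\phi$-mass so that the number of blocks is dominated by $\Phi$, run a Rademacher--Menshov maximal inequality (which indeed only needs \eqref{ebc_condition1} on intervals), apply Chebyshev and the convergence Borel--Cantelli lemma along a subsequence on which $\Phi'$ doubles, and interpolate; your accounting of the exponent $3/2+\varepsilon$ is also the right one. However, two of your steps would fail as written. First, the interpolation at heavy blocks: you propose to ``adjoin the large singleton blocks to the subsequence''. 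If that means every block consisting of a single index with $\phi_n\ge 1$, the Borel--Cantelli step breaks: take $\phi_n\equiv 3$, so every block is such a singleton, and the Chebyshev failure probability at each adjoined point $J$ (threshold $\Phi'(J)^{1/2}(\log 2\Phi'(J))^{3/2+\varepsilon}$ against the maximal inequality over $[1,J]$) is of order $(\log \Phi'(J))^{-(1+2\varepsilon)}\asymp(\log J)^{-(1+2\varepsilon)}$, whose sum over $J$ diverges. The bounded-ratio comparison only fails when the block at a dyadic crossing carries mass comparable to the entire preceding accumulation, i.e.\ $\phi'_{J_k}\ge c\,\Phi'(J_k-1)$; such ``dominant'' blocks occur at most a bounded number of times per dyadic scale of $\Phi'$, and only those (or, alternatively, the extra points $J_k-1$, one per scale) can be adjoined without destroying summability. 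Your sketch does not identify this, and the recipe as stated over-adjoins.

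Second, the incomplete final block: the claimed bound $\int_X f_n^2\,\mathrm{d}\mu\ll\phi_n+\phi_{n+1}$ for a heavy singleton is false --- take $f_n\equiv c_n=\phi_n$ large, so that \eqref{ebc_condition1} holds trivially while $\int f_n^2=\phi_n^2$; the correct bound (your own ``elementary bound'' at the end) carries the extra term $(\phi_n+\dots+\phi_b)^2$, which for heavy indices dominates, and a Chebyshev pass on $f_n$ itself then cannot have summable tails. One must instead work with $f_n-c_n$ (this is precisely where the allowance $\max_{1\le k\le N}c_k$ in \eqref{ebc_conclusion} gets consumed) and sum the resulting failure probabilities by a Stieltjes-type comparison such as $\sum_n \phi_n/\big(\Phi(n)\log^{3+2\varepsilon}\Phi(n)\big)<\infty$; or, much more simply, avoid the extra pass altogether by the standard positivity bracketing: since $f_n\ge 0$, squeeze $\sum_{n\le N}f_n$ between the complete-block sums up to $J(N)-1$ and up to $J(N)$, so that only the $c$-mass of the final block enters, which is $O(1)$ for an ordinary block and at most $\max_{1\le k\le N}c_k$ for a heavy singleton (arrange the blocking so that each index with $\phi_n\ge1$ starts its own block, which also repairs the small inaccuracy in your greedy construction). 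The same positivity device, combined with an $L^1$ tail estimate, also settles the bounded-$\Phi$ case more honestly than your ``one-line maximal inequality'' remark, since \eqref{ebc_condition1} alone does not give a maximal bound uniform in $N$ there. With these repairs your argument becomes the standard one; as proposed, the two steps above are genuine gaps.
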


\begin{remark}
Note that in statistical terms, if the sequence $c_n$ is the mean of $f_n$; i.e.
$$
c_n = \int_{X} f_n(x) \mathrm{d}\mu(x) \, ,
$$
then the l.h.s.~of~\eqref{ebc_condition1} is simply the variance ${\rm Var}(Z_{a,b}) $ of the random variable  $$Z_{a,b}=Z_{a,b}(x):=\sum_{n=a}^{b}f_n(x) \, . $$ In particular,
$
 {\rm Var}(Z_{a,b})  = \E(Z^2_{a,b})  -  \E(Z_{a,b})^2   \ \
{\rm where \ }   \;
 \E(Z_{a,b}) = \int_X Z_{a,b}(x) \mathrm{d}\mu(x)  \, .
$

\end{remark}

\medskip
\subsection{The proof of Theorem~\ref{ddimthmhd} modulo  `holes'}  \label{twoholes}

With the goal of establishing Theorem~\ref{ddimthmhd} in mind, given a real, positive function $\psi:\N\to\R_{\ge 0}$  we consider Lemma \ref{ebc}  with
\begin{equation} \label{Harman_choice_parameters}
X := [0,1]^d \, , \qquad f_n(x):= \chi_{A_{n}}\!(x)  \qquad {\rm and } \qquad
 c_n:= \phi_n:= \mu(A_n) \, ,
\end{equation}
where   $  \chi_{A_{n}}\!\!$   is the characteristic function  of the set  $A_n$ ($n \in \N$) given by
\begin{eqnarray}\label{def_Anhd}
  A_n & :=   &   \Big\{ x=\{x_1, \ldots, x_d \} \in [0,1]^d:     \dist(x_i,T^n(x)_i) \leq \psi_i(n) \all \  1 \le i \le d \Big\} \nonumber \\[2ex]
  & = &   \Big\{ x \in [0,1]^d:  T^n(x) \in  \textstyle{\prod_{i=1}^d} B( x_i , \psi_i(n) ) \Big\}   \, . 
\end{eqnarray}
\\
Then, clearly for any $ x \in X$ and $N \in \N$  we have that the
$$
{\rm l.h.s. \ of \ } \eqref{ebc_conclusion}  \ =   \ R(x, N) \, ,
$$
where $R(x,N)$ is the counting function given by \eqref{countdefhd}.  Also,  observe that the main term on the ${\rm r.h.s. \ of }$ \eqref{ebc_conclusion} is
\begin{equation}  \label{needed}
\Phi(N) :=  \sum\limits_{n=1}^{N}\mu (A_n) \, . \end{equation}     Furthermore, it is easily verified  that for any $a,b \in \N$ with $a <  b$

\begin{equation*}
\begin{aligned}
\left(\sum_{n=a}^{b}(f_n(x)-c_n) \right)^2  &=  \ \left(\sum_{n=a}^{b}f_n(x)\right)^2 \, + \, \left(\sum_{n=a}^{b}c_n\right)^2 -  \ 2\sum_{n=a}^{b}f_n(x) \cdot \sum_{n=a}^{b}c_n \\[2ex]
   =  \  \sum_{n=a}^{b}f_n(x)  \  &+  \ 2\mathop{\sum\sum}_{a\leq m < n\leq b}f_m(x)f_n(x) \  +  \  \left(\sum_{n=a}^{b}c_n\right)^2  \ -  \ 2\sum_{n=a}^{b}c_n\cdot\sum_{n=a}^{b}f_n(x) \,  ,  \vspace{2mm}
\end{aligned}
\end{equation*}

\vspace{2mm}

\noindent and so it follows that
\vspace{2mm}
\begin{equation} \label{integral_square_estimate}
\begin{aligned}
  {\rm l.h.s. \ of \ } \eqref{ebc_condition1} 
  \ =  \   \sum_{n=a}^{b}\mu(A_n)   \, + \, 2 &\mathop{\sum\sum}_{a\leq m<n\leq b}  \mu(A_m\cap A_n)  \  -  \  \left(\sum_{n=a}^{b}\mu(A_n) \right)^2
\end{aligned}
\end{equation}


\vspace{2mm}

\noindent where $\mu$ is $d$-dimensional Lebesgue measure  supported on $X$.
The upshot of this is that in view of Lemma \ref{ebc},  the proof of  Theorem~\ref{ddimthmhd}  boils down to  `appropriately' estimating  the right hand side  of \eqref{integral_square_estimate} and showing that  $\Phi(N)$  can be replaced by $\Psi(N)$.   Regarding the former, estimating the measure of the intersection of the sets  $A_n $ is where the main difficulty lies. In short we need to show that the  sets $A_n $ are pairwise independent on average with an acceptable error term. The following is  at the heart of proving the desired counting result.

\begin{proposition} \label{measinterSV} For arbitrary $a, b \in \N $ with $a < b$,
  \begin{equation*}
       2\mathop{\sum\sum}_{a\leq m<n\leq b}  \mu(A_m \cap A_n) \le  \Big( \sum_{n=a}^{b}  \mu(A_n) \Big)^2  \ + \ O\Big(  \sum_{n=a}^{b}  \mu(A_n)    \ \Big) + O(1)   \, .
  \end{equation*}
\end{proposition}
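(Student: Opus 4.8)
The plan is to estimate $\mu(A_m \cap A_n)$ for a fixed pair $m < n$ by conditioning on the order-$m$ cylinder $J_m \in \PP_m$ containing a point $x$. On such a cylinder, $T^m$ is a similarity onto $[0,1]^d$ with diagonal derivative $T'|_{J_m}$, and the condition $T^m(x) \in \prod_i B(x_i,\psi_i(m))$ cuts out a sub-rectangle of $J_m$ whose $\mu$-measure is exactly $\mu(A_m \cap J_m) = 2^d \psi(m)\,\mu(J_m)$ up to boundary effects (this is the ``$\Psi$ vs $\Phi$'' computation, which I expect to handle separately and which also explains the constant $2^d$; let me write $\mu(A_m) = 2^d \psi(m) + O(\lambda^{-m})$ or similar, absorbing edge-of-cube corrections into the $O(1)$ and $O(\sum \mu(A_n))$ terms of the statement). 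The key point is that inside $A_m \cap J_m$, the remaining constraint ``$T^n(x) \in \prod_i B(x_i,\psi_i(n))$'' for the larger index $n$ should behave, after applying the change of variables $y = T^m(x)$, like a shrinking-target-type condition at level $n-m$ for the point $T^m(x)$ — and here is where mixing enters. First I would write $A_m \cap A_n = \bigsqcup_{J_m \in \PP_m} (A_m \cap A_n \cap J_m)$, push forward by $T^m$ on each piece, and use [F2] (linearity/measure-preservation) to convert $\mu$-measure on $J_m$ into $\mu$-measure on $[0,1]^d$, thereby reducing to understanding $\mu\big( T^m(A_m \cap J_m) \cap \{\,T^{n-m}(y) \approx y\,\}\big)$ — but with the crucial subtlety that the ``target'' is centred at the moving point $y$ rather than a fixed point.

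The main technical step is therefore an exponential-mixing estimate for the recurrence-type event at a single scale. I would invoke (the higher-dimensional analogue of) [F3] / Proposition~\ref{propositionexpmix}: the set $T^m(A_m\cap J_m)$ is, within each order-$m$ structure, essentially a product of intervals of controlled size, so writing $k = n - m$, one shows
\[
\mu\big( (A_m \cap J_m) \cap A_n \big) \;=\; \mu(A_m \cap J_m)\,\mu(A_n) \big(1 + O(\gamma^{k})\big) \;+\; O\big(\gamma^k \mu(A_m \cap J_m)\big),
\]
or more precisely that $\mu(A_m \cap A_n) = \mu(A_m)\mu(A_n) + O(\gamma^{n-m})\,\mu(A_m)$ — the error being of shrinking-target type because the radius $\psi_i(n)$ is small and the copies of the target over the partition $\PP_m$ sum up correctly. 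Summing over $J_m$ restores the full $\mu(A_m)$. I would then sum over the pair $a \le m < n \le b$: the main term $2\sum\!\sum \mu(A_m)\mu(A_n)$ is bounded by $\big(\sum_{n=a}^b \mu(A_n)\big)^2$, and the error terms give $2\sum_m \mu(A_m) \sum_{k \ge 1} O(\gamma^k) = O\big(\sum_{n=a}^b \mu(A_n)\big)$ since $\sum_k \gamma^k$ converges. The diagonal term $\sum_{n=a}^b \mu(A_n)$ already appears on the right-hand side of the statement, and any residual additive constant from boundary effects (points near $\partial [0,1]^d$, where $B(x_i,\psi_i(n))$ protrudes, or the finitely many early terms where $\psi$ may be large) is absorbed into the $O(1)$.

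The hard part will be making the single-scale mixing estimate legitimate: the event $A_n$ is \emph{not} a preimage $T^{-n}(\text{ball})$ — the target moves with $x$ — so [F3] cannot be applied verbatim. The fix is to exploit the product/diagonal structure: on each order-$m$ cylinder the map $T^m$ is an affine similarity, so the ``diagonal'' condition $\mathrm{dist}(x_i, T^n(x)_i) \le \psi_i(n)$ unfolds, via $y = T^m(x)$, into a condition comparing $y$ with $T^{n-m}(y)$ on $[0,1]^d$ plus a correction coming from where $J_m$ sits; one then further decomposes over order-$(n-m)$ cylinders $J_{n-m}$ and notes that on each, $T^{n-m}$ is again affine, so the recurrence condition becomes a genuinely linear constraint whose solution set is a small rectangle of the expected measure, and summing these small rectangles over $\PP_{n-m}$ reproduces $\mu(A_n)$ with an $O(\gamma^{n-m})$-type mixing error. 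This is exactly the ``$A_n$ cannot be written as a preimage of a nice set'' difficulty flagged in \S\ref{setup}, and the two-parameter decomposition over $\PP_m$ and $\PP_{n-m}$, together with the linearity facts [F1]–[F2] and the mixing Proposition~\ref{propositionexpmix}, is what lets us sidestep it.
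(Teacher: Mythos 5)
Your outline is sound and would deliver the required upper bound, but the way you deal with the central difficulty (the target moving with $x$) is genuinely different from the paper's. Both arguments share the same skeleton: decompose over $\PP_m$, use that $J_m\cap A_m$ is a rectangle with sidelengths $\lesssim \psi_i(m)/|K_{J_m}^{(i)}-1|$ (the paper isolates this as Lemma~\ref{lemmaIJm}), push forward by $T^m$ using linearity, exploit expansion/mixing at lag $n-m$, sum over $\PP_m$ via $\sum_{J_m}|K_{J_m}|^{-1}=1$, and finally swap $2^d\psi(n)$ for $\mu(A_n)$ using the companion estimate (Proposition~\ref{measAnSV}). Where you diverge is the key device: the paper subdivides $J_m\cap A_m$ into rectangles of sidelength at most $\epsilon_n\psi_i(n)$ (with $\epsilon_n=n^{-2}$) and uses the triangle-inequality Lemma~\ref{usefullemmahigherdim} to freeze the moving centre at the centre of each small box, so that $A_n$ is locally trapped by a genuine preimage $T^{-n}(\text{rectangle})$ and the rectangle-mixing Proposition~\ref{propositionexpmix} can be applied as a black box, at the cost of $(1\pm\epsilon_n)$ losses and an error proportional to the number of boxes times $\gamma^{n-m}$. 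You instead decompose, after the pushforward $y=T^m(x)$, over the cylinders $J_{n-m}\in\PP_{n-m}$ and solve the recurrence constraint exactly: on each such cylinder the condition reads $|(K_{J_{n-m}}^{(i)}-1/K_{J_m}^{(i)})y_i-\text{const}|\le\psi_i(n)$ in each coordinate, a rectangle of measure at most $2^d\psi(n)\mu(J_{n-m})\bigl(1+O(\gamma^{n})\bigr)$ since $|K_{J_{n-m}}^{(i)}-1/K_{J_m}^{(i)}|$ is comparable to $|K_{J_{n-m}}^{(i)}|$, and then you count the cylinders meeting $T^m(J_m\cap A_m)$ with a boundary error $O(\gamma^{n-m})$ --- in effect inlining the proof of Proposition~\ref{propositionexpmix} rather than invoking it. Your route avoids the auxiliary parameter $\epsilon_n$ entirely; the paper's is more modular and reuses the same $\epsilon_n$-box trick in the proof of Proposition~\ref{measAnSV}.

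Two small cautions if you write this up. First, your intermediate heuristic that the pushed-forward condition is ``$T^{n-m}(y)\approx y$'' is not accurate: the comparison point is $(T^m|_{J_m})^{-1}(y)$, whose $i$-th coordinate is $y_i/K_{J_m}^{(i)}$ plus a constant; your final two-stage decomposition handles this correctly, but the clean claim $\mu(A_m\cap A_n)=\mu(A_m)\mu(A_n)+O(\gamma^{n-m})\mu(A_m)$ should not be asserted as an identity --- what your mechanism actually yields (and all that Proposition~\ref{measinterSV} needs) is an upper bound of the shape $\mu(A_m\cap A_n)\le 2^d\psi(n)\bigl(1+O(\gamma^{n})\bigr)\bigl(\mu(A_m)+O(\gamma^{n-m})\bigr)$, whose error terms sum to $O\bigl(\sum_{n=a}^b\mu(A_n)\bigr)+O(1)$ exactly as you indicate. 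Second, do record the uniform lower bound $|K_{J_{n-m}}^{(i)}-1/K_{J_m}^{(i)}|\ge\lambda^{n-m}-\lambda^{-m}\ge\lambda-1>0$ (slopes may be negative), since this non-degeneracy is what makes the solution set a rectangle of the expected measure.
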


With this statement at hand, it follows that
\begin{equation*} \label{rhssv}
\begin{aligned}
  {\rm r.h.s. \ of \ } \eqref{integral_square_estimate} 
  \ \ll   \   \sum_{n=a}^{b}\mu(A_n) =:  \sum_{n=a}^{b} c_n   \, .
\end{aligned}
\end{equation*}
This shows that \eqref{ebc_condition1} is satisfied with $\phi_n := c_n$ and on applying Lemma \ref{ebc} we obtain the statement: for any given $\varepsilon>0$
\begin{equation}  \label{giveme}
R(x,N)=\Phi(N)+O\left(\Phi^{1/2}(N) \ (\log\Phi(N))^{3/2+\varepsilon}\right)  \, ,
\end{equation}
\noindent for $\mu$-almost all $x\in X$.  Observe that this is precisely the desired statement \eqref{def_Psihd} apart from the fact that  $\Psi(N) $ is replaced by $\Phi(N)$.   It is easily versified that the  following estimate regarding the  measure of $A_n$ on average enables us to replaced $\Phi(N)$ by $\Psi(N) $ in \eqref{giveme} and thus complete the proof of Theorem~\ref{ddimthmhd}.

\begin{proposition} \label{measAnSV} For arbitrary $N \in \N$,
  \begin{equation*}
     \Phi(N):=  \sum_{n=1}^{N} \mu(A_n) = 2^d \sum_{n=1}^{N} \psi(n) =: \Psi(N)   \ + \ O(1)   \, .
  \end{equation*}
\end{proposition}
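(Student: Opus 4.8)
The plan is, for each fixed $n$, to compute $\mu(A_n)$ by splitting $X=[0,1]^d$ into its order-$n$ cylinders, using the diagonality of the derivative to turn $A_n\cap J$ into a product over coordinates, estimating each one–dimensional factor on the circle, and finally summing the resulting geometric-in-$n$ error over $1\le n\le N$.

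So first fix $n$ and write $\mu(A_n)=\sum_{J\in\PP_n}\mu(A_n\cap J)$. Each cylinder $J$ is a coordinate-parallel rectangle $J=J^{(1)}\times\cdots\times J^{(d)}$ on which $T^n$ is affine with diagonal derivative $\mathrm{diag}(K_1,\dots,K_d)$; here $|K_i|\ge\lambda^n$ and $T^n$ maps $J^{(i)}$ bijectively onto $[0,1]$, so $|J^{(i)}|=|K_i|^{-1}$, $\mu(J)=\prod_i|K_i|^{-1}$, and $(T^nx)_i=K_ix_i+C_i$ depends on $x_i$ alone. By \eqref{def_Anhd} this yields a product decomposition $A_n\cap J=\prod_{i=1}^d S^{(i)}_J$ with $S^{(i)}_J=\{x_i\in J^{(i)}:\dist((T^nx)_i,x_i)\le\psi_i(n)\}$, whence $\mu(A_n\cap J)=\prod_{i=1}^d m\big(S^{(i)}_J\big)$, where $m$ denotes one-dimensional Lebesgue measure. (We may assume $\psi_i(n)\le\tfrac12$ for all $i,n$: otherwise the $i$-th constraint defining $A_n$ is vacuous and can be stripped off.)

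The one-dimensional core is the estimate $m\big(S^{(i)}_J\big)=\frac{2\psi_i(n)}{|K_i-1|}+O(|K_i|^{-2})$. To see it, substitute $v=(K_i-1)x_i+C_i$: as $x_i$ runs over $J^{(i)}$ the real variable $v$ runs over an interval $V_J$ of length $|K_i-1|\cdot|K_i|^{-1}$, and the constraint is that $v$ lie within $\psi_i(n)$ of an integer. Since $|V_J|$ differs from $1$ by at most $|K_i|^{-1}$, while the set of admissible $v$ consists of one block of length $2\psi_i(n)$ in each unit interval, $V_J$ meets it in a set of measure $2\psi_i(n)+O(|K_i|^{-1})$; dividing by $|K_i-1|$ gives the claim. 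Multiplying over $i$ and using $|K_i-1|^{-1}=|K_i|^{-1}(1+O(|K_i|^{-1}))$, $\mu(J)=\prod_i|K_i|^{-1}$ and $|K_i|\ge\lambda^n$ — so that the main term is $2^d\psi(n)\mu(J)(1+O(\lambda^{-n}))$ and each of the $2^d-1$ cross terms carries a factor $O(|K_i|^{-2})$ and is $O(\lambda^{-n}\mu(J))$ — one gets $\mu(A_n\cap J)=2^d\psi(n)\,\mu(J)+O(\lambda^{-n}\mu(J))$. Summing over $J\in\PP_n$ with $\sum_{J\in\PP_n}\mu(J)=1$ gives $\mu(A_n)=2^d\psi(n)+O(\lambda^{-n})$, and summing over $1\le n\le N$ gives $\Phi(N)=2^d\sum_{n=1}^N\psi(n)+O\big(\sum_{n\ge1}\lambda^{-n}\big)=\Psi(N)+O(1)$.

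The one place that needs genuine care, and the step I would expect to be the main obstacle, is the one-dimensional estimate — precisely the behaviour of cylinders meeting the ``seam'' of the circle. The set $S^{(i)}_J$ has length comparable to that of the cylinder itself and may be cut off by the cylinder boundary, so one must verify that all these truncations together cost only $O(\lambda^{-n})$ rather than something of size $\sum_{n\le N}\psi_i(n)^2$. This is exactly where the toral distance is used: the interval $V_J$ swept by $v$ has length within $|K_i|^{-1}$ of $1$, so it always catches one essentially complete copy of the target block whatever its position, and the leftover defect is geometric in $n$ and hence summable; with the ordinary metric on $[0,1]$ the endpoints $0$ and $1$ would instead produce a genuine loss of order $\psi_i(n)^2$ per step, which need not be summable. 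The remaining bookkeeping — expanding the $d$-fold product and handling the cross terms, each of which gains an extra factor $|K_i|^{-2}$ and therefore contributes $O(\lambda^{-n})$ after summation over $J$ — is routine.
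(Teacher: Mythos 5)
Your proof is correct, but it takes a genuinely different route from the paper's. You decompose $A_n$ over the order-$n$ cylinders $J\in\PP_n$ and compute $\mu(A_n\cap J)$ exactly from the affine, diagonal form of $T^n|_{J}$ via the change of variables $v=(K_i-1)x_i+C_i$ in each coordinate; this uses no mixing at all and yields the sharper per-$n$ estimate $\mu(A_n)=2^d\psi(n)+O(\lambda^{-n})$, after which summation over $n$ is immediate. The paper instead partitions $[0,1]^d$ into rectangles of sidelengths roughly $\epsilon_n\psi_i(n)$, sandwiches $I\cap A_n$ between preimages of rectangles centred at the centre of $I$ (Lemma~\ref{usefullemmahigherdim}), and then invokes rectangular exponential mixing (Proposition~\ref{propositionexpmix}), arriving at $|\mu(A_n)-2^d\psi(n)|\ll \epsilon_n\psi(n)+\gamma^n\epsilon_n^{-d}$ with $\epsilon_n=n^{-2}$. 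Your cylinder computation is in effect a two-sided refinement of the paper's Lemma~\ref{lemmaIJm}, which only bounds the sidelengths of $J_n\cap A_n$ from above and is deployed for the pairwise intersections rather than here; what your route buys is a shorter, self-contained argument with a geometric error term, while the paper's softer route recycles the same sandwich-plus-mixing mechanism for $\mu(A_m\cap A_n)$ in Proposition~\ref{measinterSV}, so it transfers more readily to settings where one has mixing but not exact linearity on full cylinders. One shared caveat: your seam discussion correctly identifies that $\dist$ must be interpreted torally (and implicitly $\psi_i(n)\le\tfrac12$) so that a ball of radius $r$ has measure exactly $2r$; the paper's own proof relies on the same convention when it evaluates $\mu\big(\prod_{i=1}^d B(z_{I,i},(1\pm\epsilon_n)\psi_i(n))\big)$, so making it explicit is a merit of your write-up rather than a gap.
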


The upshot of the above discussion is that the proof of Theorem~\ref{ddimthmhd} is reduced to establishing Propositions~\ref{measinterSV} $\&$ \ref{measAnSV}.  This will be the subject of \S\ref{secproof}.   First we make a slight detour and show that in the shrinking target case,  due to the fact  the sets $E_n$ associated with  $ W( T, \psi)$  can be expressed as the pre-image of a ball (see \eqref{souseful}), we are directly able  to exploit ``measure preserving'' and ``exponentially mixing'' to establishing the analogue of Proposition~\ref{measinterSV}.

\subsection{The proof of Theorem~\ref{gencountthm}}

For each $n \in \N$, let  $E_n$  be the set defined in \eqref{souseful}.
It  is not difficult to see that on replacing $A_n$ by $E_n$ in \S\ref{twoholes}, the proof of Theorem~\ref{gencountthm} is reduced to showing  the  following pairwise independent on average statement.

\begin{proposition} \label{measinterSVST} For arbitrary $a, b \in \N $ with $a < b$,
  \begin{equation*}
       2\mathop{\sum\sum}_{a\leq m<n\leq b}  \mu(E_m \cap E_n) \le  \Big( \sum_{n=a}^{b}  \mu(E_n) \Big)^2  \ + \ O\Big(  \sum_{n=a}^{b}  \mu(E_n)    \ \Big)   \, .
  \end{equation*}
\end{proposition}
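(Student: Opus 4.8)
The plan is to exploit the identity $E_n = T^{-n}(B_n)$ together with the measure-preserving and exponentially mixing properties [F2], [F3] (or rather their abstract counterparts, since Theorem~\ref{gencountthm} is stated for general exponentially mixing systems). First I would fix $a \le m < n \le b$ and write $\mu(E_m \cap E_n) = \mu\big(T^{-m}(B_m) \cap T^{-n}(B_n)\big)$. The key observation is that $T^{-m}(B_m) \cap T^{-n}(B_n) = T^{-m}\big(B_m \cap T^{-(n-m)}(B_n)\big)$, so by measure preservation (which is implied by exponential mixing) we get $\mu(E_m \cap E_n) = \mu\big(B_m \cap T^{-(n-m)}(B_n)\big)$. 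Now I apply the exponential mixing hypothesis with the ball $B = B_m$, the set $F = B_n$, and exponent $k = n-m$, to obtain
\[
\mu(E_m \cap E_n) = \mu(B_m)\mu(B_n) + O\big(\gamma^{\,n-m}\big)\,\mu(B_n) = \mu(E_m)\mu(E_n) + O\big(\gamma^{\,n-m}\big)\,\mu(E_n),
\]
using \eqref{invst} in the last step.

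Next I would sum this estimate over the double range $a \le m < n \le b$. The main terms reassemble, after adding back the diagonal, into $\big(\sum_{n=a}^b \mu(E_n)\big)^2$, which accounts for the leading term on the right-hand side of the proposition (indeed $2\sum\sum_{m<n}\mu(E_m)\mu(E_n) \le \big(\sum_n \mu(E_n)\big)^2$). For the error term, I would bound
\[
2\mathop{\sum\sum}_{a\le m<n\le b} \gamma^{\,n-m}\,\mu(E_n) \;\le\; 2\sum_{n=a}^b \mu(E_n) \sum_{j=1}^{\infty} \gamma^{\,j} \;=\; \frac{2\gamma}{1-\gamma}\sum_{n=a}^b \mu(E_n),
\]
which is exactly $O\big(\sum_{n=a}^b \mu(E_n)\big)$ since $0 < \gamma < 1$ makes the geometric series converge to an absolute constant. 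Combining the two pieces gives the claimed inequality.

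The step requiring the most care is the reduction $T^{-m}(B_m) \cap T^{-n}(B_n) = T^{-m}\big(B_m \cap T^{-(n-m)}(B_n)\big)$ together with the correct application of measure preservation: one must be careful that it is precisely the "nice" structure $E_n = T^{-n}(B_n)$ — the preimage of a fixed ball — that makes this factorization legitimate, and this is exactly what fails for the recurrence sets $A_n$. For $A_n$ the analogous set is $\{x : T^n x \in B(x,\psi(n))\}$, whose defining ball moves with $x$, so it cannot be pulled back as a fixed target and the clean mixing estimate is unavailable; that is the whole reason Proposition~\ref{measinterSV} needs a genuinely different (and harder) argument. Here, however, everything is routine once the factorization is in place, and no growth condition on $\psi$ is needed because the geometric decay in $\gamma^{n-m}$ is summable regardless of how slowly $\mu(B_n)$ decays.
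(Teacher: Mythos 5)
Your proposal is correct and follows essentially the same route as the paper: factor $E_m\cap E_n = T^{-m}\bigl(B_m\cap T^{-(n-m)}(B_n)\bigr)$, use measure preservation and exponential mixing to get $\mu(E_m)\mu(E_n)+O(\gamma^{n-m})\mu(E_n)$, and then sum the geometric tail. Nothing further is needed.
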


\begin{proof}
Let  $m < n  $.  Then in view of  \eqref{souseful} and the fact that $T$ is measure preserving and exponentially mixing  with respect to $\mu$, it follows that there exist constants $C > 0 $ and $\gamma \in (0, 1) $ such that
\begin{eqnarray*} \label{rhssveasy}
\mu(E_m \cap E_n)  &  =  &  \mu\Big(T^{-m}(B_m)  \cap T^{-n}(B_n)\Big)  =  \mu\Big(B_m  \cap T^{-(n-m)}(B_n)\Big) \\[1ex]
& \le  &  \mu\big(B_m \big)  \mu\big(B_n \big)   +   C  \gamma^{n-m} \mu\big(B_n \big)  \ = \  \mu\big(E_m \big)  \mu\big(E_n \big)   +   C  \gamma^{n-m} \mu\big(E_n \big)  \, .
\end{eqnarray*}
In turn, it follows that
\begin{eqnarray*}
2\mathop{\sum\sum}_{a\leq m<n\leq b}  \mu(E_m \cap E_n)   &  \le  &
\Big( \sum_{n=a}^{b}  \mu(E_n) \Big)^2  \ + \ C \sum_{n=a}^{b}  \mu(E_n) \sum_{m=1}^{\infty}  \gamma^m   \  .
\end{eqnarray*}
This completes the proof since the sum involving $\gamma$ is convergent.
\end{proof}

\section{Completing the proof of Theorem~\ref{ddimthmhd}:  filling in the holes.   }  \label{secproof}

Throughout,  $X:=[0,1]^d$ and for each  $n \in N$ the set $A_n$ is given by \eqref{def_Anhd}.  In view of \S\ref{twoholes}, in order to complete the proof of Theorem~\ref{ddimthmhd} we need to establish  Propositions~\ref{measinterSV} $\&$ \ref{measAnSV}.  We start with a few preliminaries.

\subsection{Preliminaries} \label{prelimhd}
Recall,  for   $m \in \N$,   $J_m  \in \PP_m $   is a rectangle of linearity for $T^m$ 
and we are assuming that $(T^m)'|_{J_m}$ is a diagonal matrix.   With this in mind, 
for each $i=1,\ldots,d$,  let $K_{J_m}^{(i)}$ be the $(i,i)$-th component of the diagonal matrix $(T^m)'|_{J_m}$  and let $K_{J_m} = \prod_i K_{J_m}^{(i)}$.  Then,  given the setup,  we have that  $ |K_{J_m}^{(i)} |  = 1/ \mu_1( J_m^{(i)} )   $ for each  $i=1,\ldots,d$, where $\mu_1$ is one-dimensional Lebesgue measure. Hence,    $ |K_{J_m} |  = 1/ \mu( {J_m} )   $ for each  $J_m  \in \PP_m $ where  $ \mu$ is  $d$-dimensional Lebesgue measure.     Moreover, it follows that there exists $\lambda > 1 $ such that for $m \in \N$ and  $J_m  \in \PP_m $
\begin{equation}
\label{gammadef**}
|K_{J_m}^{(i)}|\ge   \lambda^m        \qquad     (1 \le i \le d)  \, .  
\end{equation}
Thus, it follows that   $T$ is expanding   and indeed that  $T$ is  measure preserving with respect to $\mu$.  In terms of the analogues of the one-dimensional facts [F1] - [F3] stated at  the start of \S\ref{setup}, it remains to establish [F3].

\begin{proposition}
\label{propositionexpmix}
$T$ is rectangular exponentially mixing with respect to $\mu$; i.e. there exists  a constant  $0<\gamma<1$ such that for any $ n \in \N$, and  any rectangle  $ E \in [0,1]$ and measurable set $F\in [0,1]$,
\begin{equation}\label{mixing}
\mu(E\cap T^{-n}(F))=\mu(E)\mu(F)+O(\gamma^n)\mu(F)  \, .
\end{equation}
\end{proposition}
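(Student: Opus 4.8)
The plan is to reduce the $d$-dimensional rectangular mixing statement to the combinatorics of the cylinder partition $\PP_n$, exploiting the fact that each $T^n|_{J_n}$ is an affine bijection of $J_n$ onto all of $X$ with diagonal linear part. First I would fix $n\in\N$, a coordinate-parallel rectangle $E\subseteq X$, and a measurable set $F\subseteq X$, and decompose according to cylinders: $\mu(E\cap T^{-n}(F)) = \sum_{J_n\in\PP_n}\mu(E\cap J_n\cap T^{-n}(F))$. On each $J_n$ the map $T^n$ is a similarity-type affine map sending $J_n$ onto $X$ with $|K_{J_n}| = 1/\mu(J_n)$, so by fact [F2] (linearity and measure preservation on cylinders, which holds here since $(T^n)'|_{J_n}$ is diagonal) we have $\mu(J_n\cap T^{-n}(F)) = \mu(J_n)\,\mu(F)$ exactly. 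The point is to show that intersecting further with $E$ only perturbs this by a controlled amount.

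The key step is therefore to estimate $\mu(E\cap J_n\cap T^{-n}(F))$ against $\mu(E\cap J_n)\,\mu(F)$. Since $E$ is a coordinate-parallel rectangle and $J_n$ is too, $E\cap J_n$ is again a coordinate-parallel (sub-)rectangle of $J_n$; call it $R_{J_n}$. Under the affine map $T^n|_{J_n}$, the image $T^n(R_{J_n})$ is a coordinate-parallel rectangle in $X$, and $T^n(R_{J_n}\cap T^{-n}(F)) = T^n(R_{J_n})\cap F$. Thus $\mu(E\cap J_n\cap T^{-n}(F)) = |K_{J_n}|^{-1}\,\mu\big(T^n(R_{J_n})\cap F\big) = \mu(J_n)\,\mu\big(T^n(R_{J_n})\cap F\big)$. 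If $R_{J_n}$ were all of $J_n$ the image would be all of $X$ and we would get $\mu(J_n)\mu(F)$; the error is $\mu(J_n)\big|\mu(T^n(R_{J_n})\cap F) - \mu(R_{J_n}\cap T^{-n}(F))\big|$ rewritten appropriately — more precisely, I would write
\begin{equation*}
\mu(E\cap T^{-n}(F)) - \mu(E)\mu(F) = \sum_{J_n\in\PP_n} \mu(J_n)\Big(\mu\big(T^n(R_{J_n})\cap F\big) - \mu\big(T^n(R_{J_n})\big)\mu(F)\Big),
\end{equation*}
using $\sum_{J_n}\mu(J_n)\mu(T^n(R_{J_n}))\mu(F) = \sum_{J_n}\mu(E\cap J_n)\mu(F) = \mu(E)\mu(F)$. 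Each summand is bounded in absolute value by $\mu(J_n)\,\mu(F)$ trivially, which gives only $\mu(F)$ — not good enough. The improvement comes from the geometry: the discrepancy $\big|\mu(T^n(R_{J_n})\cap F) - \mu(T^n(R_{J_n}))\mu(F)\big|$ is small unless $T^n(R_{J_n})$ is a \emph{thin} rectangle, and by the expansion estimate \eqref{gammadef**}, in each coordinate $i$ the width of $J_n^{(i)}$ is at most $\lambda^{-n}$, so $R_{J_n}$ has width $\le\lambda^{-n}$ in coordinate $i$; its image under $T^n$ has width $|K_{J_n}^{(i)}|\cdot(\text{width of }R_{J_n}^{(i)})$, which is the \emph{fraction} of the $E$-side occupied, hence $\le 1$. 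The cylinders $J_n$ meeting the boundary slabs of $E$ (those for which $R_{J_n}^{(i)}\subsetneq J_n^{(i)}$ for some $i$) total measure $O(\lambda^{-n})$ since $E$ has at most $2d$ faces and each slab of thickness $\lambda^{-n}$ around a face is covered by cylinders of total measure $O(\lambda^{-n})$; for the remaining cylinders $R_{J_n}=J_n$ so $T^n(R_{J_n})=X$ and the discrepancy vanishes. This yields the bound $O(\lambda^{-n})\mu(F)$, and we set $\gamma=\lambda^{-1}\in(0,1)$.

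The main obstacle I anticipate is making the ``boundary slab'' bookkeeping clean and dimension-uniform: one must argue that the total $\mu$-mass of cylinders $J_n\in\PP_n$ that are not entirely contained in the interior of $E$ in all $d$ coordinates is $O(\lambda^{-n})$, with the implied constant depending only on $d$ (and $\lambda$), and handle the fact that $\PP_n$ need not be a product partition — different cylinders can have different shapes, though each coordinate-side has length $\le\lambda^{-n}$ by \eqref{gammadef**}. The remedy is to overestimate: the set of $x\in X$ within $\ell^\infty$-distance $\lambda^{-n}$ of $\partial E$ has $\mu$-measure $\le 2d\lambda^{-n}$, and every ``bad'' cylinder lies inside this neighbourhood (since it has diameter $\le\sqrt{d}\,\lambda^{-n}$ and meets both a face of $E$ and, for the discrepancy to be nonzero, the complementary region), so summing their measures gives $O(d\lambda^{-n})$. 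Once this is in hand the proposition follows, and I would remark that taking $E$ to be a ball (or containing $E$ in a bounding box) recovers the classical fact [F3].
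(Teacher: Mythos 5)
Your proposal is correct and follows essentially the same route as the paper: decompose over the partition into cylinders of order $n$, use the exact identity $\mu(J_n\cap T^{-n}(F))=\mu(J_n)\mu(F)$ on full cylinders, and control the cylinders meeting $\partial E$ by noting that each coordinate-side of a cylinder has length at most $\lambda^{-n}$, so their total measure is $O(d\gamma^n)$ with $\gamma=1/\lambda$. If anything, your exact telescoping identity makes the two-sided nature of the $O(\gamma^n)$ error explicit, whereas the paper only writes out the upper-bound chain of inequalities.
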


\begin{proof}
Let $E$ be a rectangle and $F\subset [0,1]^d$ be any set. First suppose that $E = J_n \in \PP_n$. Then by the change of variables formula
\[
\mu(E\cap T^{-n}(F)) = \mu((T^n|_E)^{-1}(F))
= |\det((T^n|_E)^{-1})'| \mu(F) = \mu(E) \mu(F).
\]
Note that  $\det((T^n|_E)^{-1})'    = 1/ K_{J_m} $.
Given a  set $F$, we denote by $\del F$ its boundary  and by $ \NN(F,\delta)$  its $\delta$-neighbourhood. Now let $E$ be any rectangle.  It  follows that
\begin{align*}
\mu(E\cap T^{-n}F)
&\le \sum_{\substack{J_n \\ J_n\cap E \neq \emptyset}} \mu(J_n\cap T^{-n}(F))\\[1ex]
&= \sum_{\substack{J_n \\ J_n\cap E \neq \emptyset}} \mu(J_n)\mu(F)\\[1ex]
&\le \mu(E)\mu(F) + \sum_{\substack{J_n \\ J_n\cap \del E \neq \emptyset}} \mu(J_n)\mu(F)\\[1ex]
&\le \mu(E)\mu(F) + \mu\big(\NN(\del E, \gamma^n)\big)\mu(F)\\[2ex]
&\le \mu(E)\mu(F) + 4d  \gamma^n\mu(F)  \, ,
\end{align*}
where  $0<\gamma := 1/ \lambda  < 1$  and $\lambda$ satisfies  \eqref{gammadef**}.
\end{proof}

\noindent It is evident that we can take $\gamma := 1/ \lambda  $ in the statement of Proposition~\ref{propositionexpmix} and so we can rewrite \eqref{gammadef**} as: 
\begin{equation}
\label{gammadefsv}
|K_{J_m}^{(i)}|\ge   \gamma^{-m}        \qquad     (1 \le i \le d)  \, .  
\end{equation}

\medskip

The following  is an  extremely useful statement   and is a straightforward application of the triangle inequality.    In short, it provides a   mechanism for ``locally'' representing $A_n$ as the inverse image of a coordinate-parallel rectangle.  In turn this allows us to exploit mixing.

\begin{lemma}  \label{usefullemmahigherdim}
Let $I := \prod_{i=1}^d B(z_i, r_i)$ be a coordinate-parallel rectangle centreed at $z \in X$.  Then for any $m \in \N$  with  $\psi_i(m) > r_i$ and any set $S \subseteq B$
\begin{equation*}
S \cap  T^{-m} \left( \prod_{i=1}^d B \left(z_i,  \psi_i(m) - r_i \right) \right)   \ \subset \   S \cap A_m     \ \subset \   S \cap  T^{-m} \left( \prod_{i=1}^d B (z_i,  \psi_i(m) + r_i ) \right)
\end{equation*}
\end{lemma}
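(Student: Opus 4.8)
The plan is to prove Lemma~\ref{usefullemmahigherdim} directly from the triangle inequality, working coordinate-by-coordinate. Fix $m \in \N$ with $\psi_i(m) > r_i$ for all $i$, fix a point $x \in S \subseteq I$, and for each coordinate $i$ write $x_i \in B(z_i, r_i)$, so $\dist(x_i, z_i) < r_i$. The membership $x \in A_m$ means exactly that $\dist(x_i, T^m(x)_i) \leq \psi_i(m)$ for every $i$, so the whole statement decouples into $d$ one-dimensional inclusions, and it suffices to establish, for each $i$, the chain of implications
\begin{equation*}
\dist\bigl(T^m(x)_i, z_i\bigr) \leq \psi_i(m) - r_i \;\Longrightarrow\; \dist\bigl(T^m(x)_i, x_i\bigr) \leq \psi_i(m) \;\Longrightarrow\; \dist\bigl(T^m(x)_i, z_i\bigr) \leq \psi_i(m) + r_i.
\end{equation*}

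For the first implication, I would write $\dist(T^m(x)_i, x_i) \leq \dist(T^m(x)_i, z_i) + \dist(z_i, x_i) < (\psi_i(m) - r_i) + r_i = \psi_i(m)$ by the triangle inequality and the bound $\dist(x_i,z_i) < r_i$. For the second, similarly $\dist(T^m(x)_i, z_i) \leq \dist(T^m(x)_i, x_i) + \dist(x_i, z_i) < \psi_i(m) + r_i$. Intersecting over $i = 1, \ldots, d$ recovers the rectangle $\prod_i B(z_i, \psi_i(m) \mp r_i)$, and since the point $x$ was an arbitrary element of $S$, taking $S \cap (\,\cdot\,)$ throughout gives both inclusions. (The hypothesis $\psi_i(m) > r_i$ is what guarantees the inner rectangle $\prod_i B(z_i, \psi_i(m) - r_i)$ is nonempty, so it is not vacuous; it plays no other role in the argument.)

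There is essentially no obstacle here — the lemma is a packaging statement, and the only thing to be careful about is bookkeeping: keeping the strict versus non-strict inequalities consistent (the balls $B(z_i, r_i)$ are open, the recurrence condition uses $\leq$), and making sure the reduction to coordinates is clean, i.e.\ that $T^{-m}\bigl(\prod_i B(w_i, s_i)\bigr) = \{x : T^m(x)_i \in B(w_i, s_i) \text{ for all } i\}$, which is immediate from the definition of preimage and product set. I would state the coordinatewise inclusions, invoke the triangle inequality in one line each, and then note that intersecting over $i$ and prepending $S \cap$ completes the proof; the write-up should be three or four lines.
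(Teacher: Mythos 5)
Your proof is correct and follows essentially the same route as the paper: both directions are a one-line application of the triangle inequality between $T^m(x)_i$, $x_i$, and $z_i$ in each coordinate, with $S \cap(\,\cdot\,)$ carried along. The paper's own proof is exactly this coordinatewise argument (it even notes, as you implicitly do, that the right-hand inclusion does not need the hypothesis $\psi_i(m) > r_i$), so nothing further is required.
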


\begin{proof}
Let $ x \in S \cap A_m$.  Then, by definition  $\dist (x_i,z_i) \le r_i $ and $\dist (T^m(x)_i,x_i) <  \psi_i(m)$.   So by the triangle inequality, it follows (irrespective of whether or not  $\psi_i(m) > r_i$) that
$$  \dist\big(T^m(x)_i, z_i \big)       \le         \dist\big(T^m(x)_i, x_i  \big)  +  \dist\big(x_i, z_i  \big)  \ \le  \  \psi_i(m) + r_i \, .
$$
In other words, $T^m(x) \in \prod_{i=1}^d B \big( z_i, \psi_i(m) + r_i  \big)$ and so $ x \in \ T^{-m} \big(\prod_{i=1}^d B \big( z_i, \psi_i(m) + r_i  \big) \big) $. Hence,
$$S \cap A_m     \ \subset \   S \cap  T^{-m} \big( B \left(z,  \psi(m) + r  \right) \big)\,  $$
which is precisely the right hand side of the desired statement.
For the left hand side, let $x \in  S \cap  T^{-m} \big( \prod_{i=1}^d B (z_i,  \psi_i(m) - r_i ) \big)$.   Then,  by definition  $\dist (x_i,z_i) \le r_i $ and $\dist (T^m(x)_i,z_i)  ) <  \psi_i(m) -r_i $.  So by the triangle inequality, it follows that
$$  \dist\big(T^m(x)_i, x_i \big)       \le         \dist\big(T^m(x)_i, z_i  \big)  +  \dist\big(x_i, z_i  \big)  \ \le  \  \psi_i(m) \, .
$$
Thus,  $ x \in S \cap A_m$ and this  establishes the left hand side of the desired statement.
\end{proof}

We now consider a special case of the previous lemma in which  $B=S=J_m$

\begin{lemma}
\label{lemmaIJm}
For  any $m \in \N$ and each rectangle of linearity $J_m$ for $T^m$,  we have that $
I_{J_m} := J_m\cap A_m$  is a rectangle  whose  $i$-dimensional sidelength $\ell_n^{(i)}$  satsifies  
\[
\ell_n^{(i)}  \ \leq \  \frac{2\psi_i(m)}{|K_{J_m}^{(i)} - 1|}\cdot
\]
\end{lemma}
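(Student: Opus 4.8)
The plan is to use the affine structure of $T^m$ on the cylinder $J_m$. Because $(T^m)'|_{J_m}$ is diagonal with diagonal entries $K_{J_m}^{(i)}$, the restriction $T^m|_{J_m}$ acts separately in each coordinate as an affine map, so the recurrence condition defining $A_m$ splits into $d$ independent one-dimensional conditions; each of these is a linear inequality whose governing slope is $K_{J_m}^{(i)}-1$, the slope of the return map $x_i\mapsto T^m(x)_i-x_i$, and this is precisely where the factor $|K_{J_m}^{(i)}-1|$ in the denominator comes from.

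In detail, fix $J_m\in\PP_m$. Affineness with diagonal linear part provides constants $c_1,\dots,c_d\in\R$ with $T^m(x)_i=K_{J_m}^{(i)}x_i+c_i$ for all $x\in J_m$ and each $i$, so for $x\in J_m$,
\[
\dist\bigl(T^m(x)_i,\,x_i\bigr)=\dist\bigl((K_{J_m}^{(i)}-1)x_i+c_i,\;0\bigr).
\]
Thus $x\in J_m\cap A_m$ if and only if $x\in J_m$ and, for every $i$, the quantity $(K_{J_m}^{(i)}-1)x_i+c_i$ lies in $B(0,\psi_i(m))$. By [F1] (equivalently \eqref{gammadefsv}) one has $|K_{J_m}^{(i)}|\ge\gamma^{-m}>1$, hence $K_{J_m}^{(i)}-1\neq0$, and $t\mapsto(K_{J_m}^{(i)}-1)t+c_i$ is an affine bijection of $\R$ with constant slope of absolute value $|K_{J_m}^{(i)}-1|$; therefore the admissible set of values $x_i$ is the preimage of $B(0,\psi_i(m))$, an interval of length $2\psi_i(m)/|K_{J_m}^{(i)}-1|$.

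Intersecting these $d$ conditions and intersecting with $J_m$, one obtains
\[
I_{J_m}=J_m\cap A_m=J_m\cap\prod_{i=1}^{d}\bigl\{\,t:\ |(K_{J_m}^{(i)}-1)t+c_i|\le\psi_i(m)\,\bigr\},
\]
the intersection of two coordinate-parallel rectangles, hence a (possibly empty) coordinate-parallel rectangle; its $i$-th sidelength is at most the length $2\psi_i(m)/|K_{J_m}^{(i)}-1|$ of its $i$-th factor, which is the asserted bound. (Lemma~\ref{usefullemmahigherdim} with $S=I=J_m$ squeezes $I_{J_m}$ between two rectangles; the computation above is what pins down that $I_{J_m}$ is itself a rectangle and gives the sharp constant $|K_{J_m}^{(i)}-1|$.) There is essentially no obstacle here: the only step to verify is the coordinatewise decoupling, which is immediate from the construction of $\PP_m$ and the diagonality of $T'$, and if $\dist$ is a torus metric the ball $B(0,\psi_i(m))$ could a priori split into several arcs, but the image of $J_m^{(i)}$ under $t\mapsto(K_{J_m}^{(i)}-1)t$ has length $|K_{J_m}^{(i)}-1|/|K_{J_m}^{(i)}|<1$, so the admissible set of $x_i$ remains a single interval obeying the same length bound.
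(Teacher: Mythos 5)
Your argument is essentially the paper's own proof: both write $T^m|_{J_m}$ as an affine map with diagonal linear part, decouple the condition defining $A_m$ coordinate by coordinate, and solve the one-dimensional inequality $\dist\big((K_{J_m}^{(i)}-1)x_i+c_i,0\big)<\psi_i(m)$ to conclude that $J_m\cap A_m$ is the intersection of $J_m$ with a coordinate-parallel rectangle whose $i$-th side has length $2\psi_i(m)/|K_{J_m}^{(i)}-1|$. (Your closing aside about a torus metric is unnecessary in the paper's setting of $[0,1]^d$ with the standard distance, and the inequality $|K_{J_m}^{(i)}-1|/|K_{J_m}^{(i)}|<1$ invoked there can fail for negative $K_{J_m}^{(i)}$, but this does not affect the main argument.)
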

\begin{proof}
Since $T^m$ is linear on $J_m$   and $(T^m)'|_{J_m}$ is a diagonal matrix, it follows that for each $J_m$ there exists some $z\in \R^d$ such that 
\[
T^m(x) = (K_{J_m} x_1 - z_1 , \ldots ,  K_{J_m} x_d - z_d )    \quad \forall  \  x\in J_m   \, . 
\]
 Thus
\begin{align*}
x\in J_m\cap A_m
&\Longleftrightarrow  \ \forall    \ i=1,\ldots,d \; \\  & \hspace*{13ex} x_i\in J_m^{(i)}  \ \text{ and }  \  \dist(x_i,K_{J_m}^{(i)} x_i - z_i) < \psi_i(m)\\[2ex] 
&\Longleftrightarrow   \ \forall \  i=1,\ldots,d \; \\  & \hspace*{13ex} x_i\in J_m^{(i)}  \  \text{ and }  \  \dist(z_i,(K_{J_m}^{(i)} - 1) x_i) < \psi_i(m)\\[2ex]
&\Longleftrightarrow   \ \forall \  i=1,\ldots,d \; \\  & \hspace*{13ex} x_i\in I_{J_m}^{(i)} := J_m^{(i)} \cap B \Big((K_{J_m}^{(i)} - 1)^{-1} z_i, |K_{J_m}^{(i)} - 1|^{-1}\psi_i(m) \Big)  \, . 
\end{align*}
The upshot is that $I_{J_m} := J_m\cap A_m$ is a rectangle $\prod_{i=1}^d I_{J_m}^{(i)}$  and the sidelength $\ell_n^{(i)}$ of   the  $i$-dimensional side  $I_{J_m}^{(i)}$  is bounded above by  $ 2 |K_{J_m}^{(i)} - 1|^{-1}\psi_i(m)  $  as claimed. . 
\end{proof}

\subsection{Calculating the measure of $A_n$ on average}  \label{hd}
The goal of this section is to prove the   Proposition~\ref{measAnSV}.

Let $\{\epsilon_n\}_{n \in \N}$ be a sequence of real numbers in $(0,1)$  that we specify later.  At this point it is enough to observe that $\epsilon_n \psi_i(n) < \psi_i(n) < 1 $  for any $ n \in \N$ and $1 \le i \le d$.  With this in mind, for each $1 \le i \le d$, we split  $[0,1]$ into 
 $$\left\lceil \frac{1}{\epsilon_n\psi_i(n)}\right\rceil$$ disjoint intervals of equal length 
\begin{equation}  \label{slv1}
{l}_n^{(i)} := \frac{1}{\left\lceil
\frac{1}{\epsilon_n \psi_i(n)}
\right\rceil}    \, . 
\end{equation}
Taking the product, we can write $X=[0,1]^d$ as the union of
\[
\prod_{i=1}^d \left\lceil \frac 1{\epsilon_n\psi_i(n)}\right\rceil
\]
disjoint rectangles whose $i$-dimensional sidelength ${l}_n^{(i)}$ is given by \eqref{slv1} and by definition satisfies 
\begin{equation}  \label{slv22}
{l}_n^{(i)}   \leq  \epsilon_n \psi_i(n) \, . 
\end{equation}
Let   $\II$ denote the corresponding collection of rectangles.  Fix a rectangle $I = \prod_i I^{(i)} \in \II$ and let $z_I =(z_{I,i},   \ldots  z_{I,d})$ be its centre. Then,  on applying Lemma~\ref{usefullemmahigherdim}  (with $B=S=I$) and using \eqref{slv22}, we obtain  that  
\begin{equation}\label{slv2}
I \cap  T^{-n} \left( \prod_{i=1}^d B \left(z_{I,i}, (1 - \epsilon_n) \psi_i(n) \right) \right) \subset
I \cap A_n  \subset    I \cap  T^{-n} \left( \prod_{i=1}^d B \left(z_{I,i}, (1 + \epsilon_n) \psi_i(n) \right) \right) .  
\end{equation}
Now the left hand side of \eqref{slv2}
together with exponential mixing (Proposition~\ref{propositionexpmix}), implies that 
\begin{eqnarray*}
\mu (  I \cap A_n )  & \ge &      \mu  \Big(   I \cap T^{-n} \left( \textstyle{\prod_{i=1}^d}B \big(z_{I,i}, (1 - \epsilon_n) \psi_i(n) \big) \right)  \Big)  \\[1ex]
& \ge  &  \big(  \mu( I )  - C \gamma^n \big)   \  \mu  \Big(  \textstyle{\prod_{i=1}^d}B \big(z_{I,i}, (1 - \epsilon_n) \psi_i(n) \big)  \Big)  \\[1ex]
& \ge  &    \mu(I) 2^d (1-\epsilon_n)^d \psi(n) - C \gamma^n 2^d  \psi(n) 
\, .
\end{eqnarray*}
Thus, on using the fact that $(1-\epsilon_n)^d  \ge 1- d \epsilon_n$, it follows that 
\begin{eqnarray*}
\mu (A_n) = \sum_{ I \in \II} \mu (  I \cap A_n )  & \ge &   2^d (1-\epsilon_n)^d \psi(n) - \# \II \  C \gamma^n 2^d  \psi(n)
 \\[1ex]
& \ge  &   2^d\psi(n)  - 2^d d  \epsilon_n \psi(n)  \ - \   2^{2d}  C \gamma^n   \frac{1}{\epsilon_n^{d}}
\, .
\end{eqnarray*}
On using the right  hand side of \eqref{slv2}, and appropriately modifying the above  argument, we obtain  the  following  complementary upperbound estimate
\begin{eqnarray*}
\mu (A_n)   \ \le  \ 2^d\psi(n)  + 2^{2d}   \epsilon_n \psi(n)  \ + \   2^{3d}  C \gamma^n   \frac{1}{\epsilon_n^{d}}
\, .
\end{eqnarray*}
The upshot is that 
$$
\big|\mu (A_n)  -  \ 2^d\psi(n) \big|   \  \le  \    2^{2d}   \epsilon_n \psi(n)  \ + \   2^{3d}  C \gamma^n   \frac{1}{\epsilon_n^{d}}
\,
$$
and the desired statement follows on putting $ \epsilon_n = n^{-2}$ and observing that  both $ \sum_{n \in \N} \epsilon_n $  and  $ \sum_{n \in \N} \gamma^n/ \epsilon_n^d $ are convergent sums.

\subsection{Calculating the measure of $A_m \cap A_n$ on average} \label{interhd}
The goal of this section is to prove Proposition~\ref{measinterSV}.

\medskip

Let  $m < n  $.  To  start with we fix  a rectangle of linearity $J_m = \prod_i J_m^{(i)}$ for $T^m$ and
 estimate  the measure of   $ I_{J_m} \cap A_n :=  (J_m \cap A_m ) \cap A_n $. As in the proof of Proposition~\ref{measAnSV},    in what follows  $\{\epsilon_n\}_{n \in \N}$ is  a sequence of real numbers in $(0,1)$  that we specify later.

Let $I_{J_m} = J_m\cap A_m$. By Lemma \ref{lemmaIJm}, we have that  $I_{J_m} = \prod_{i=1}^d I_{J_m}^{(i)}$, where each $I_{J_m}^{(i)}$ is an interval of length $\ell_n^{(i)}   \leq 2\psi_i(m)/|K_{J_m}^{(i)} - 1|$. For each $1 \le i \le d$,  the interval $I_{J_m}^{(i)}$ can be written as the union of  
\[
\left\lceil
\frac{2|K_{J_m}^{(i)} - 1|^{-1} \psi_i(m)}{\epsilon_n \psi_i(n)}
\right\rceil  =: \NN_{J_m}^{(i)}
\]
disjoint intervals of equal length ${l}_n^{(i)} $. Indeed,  
\begin{equation} \label{ned} {l}_n^{(i)}   \ :=  \   \frac{\ell_n^{(i)}}{\NN_{J_m}^{(i)} } 
\ \le \ \epsilon_n \psi_i(n)  \,   . \end{equation}   Thus, $I_{J_m}$ can be written as the union of  
\[
\prod_{i=1}^d \left\lceil
\frac{2|K_{J_m}^{(i)} - 1|^{-1} \psi_i(m)}{\epsilon_n \psi_i(n)}
\right\rceil
\]
disjoint rectangles $I$ whose $i$-dimensional sidelength   is $ {l}_n^{(i)} $ and satisfies \eqref{ned}.     Fix such a rectangle $I = \prod_i I^{(i)}$  and let $z_I$ be its centre.
 In view of \eqref{ned} we are guaranteed that $\psi_i(n)  > {l}_n^{(i)} $   ($1 \le i \le d$),  and  thus  on applying Lemma~\ref{usefullemmahigherdim}  (with $B=S=I$)  we obtain that
\begin{eqnarray*}
I \cap A_n  & \subset  &   I \cap  T^{-n} \left( \prod_{i=1}^d B \left(z_{I,i}, (1 + \epsilon_n) \psi_i(n) \right) \right) .
\end{eqnarray*}
It follows that
\[
T^m(I\cap A_n) \subset T^m(I) \cap T^{-(n-m)}\left( \prod_{i=1}^d B \left(z_{I,i}, (1 + \epsilon_n) \psi_i(n) \right) \right)  \, . 
\]
This together with   exponential mixing (Proposition~\ref{propositionexpmix}) implies that 
\[
\mu \big(  T^m(I \cap  A_n)   \big) \leq \Big(\mu\big(T^m(I)\big) + C \gamma^{n-m}\Big)  \ \prod_{i=1}^d \big(2(1+\epsilon_n)\psi_i(n)\big)  \,  , 
\]
and  on summing over all rectangles $I$ in the disjoint partition of  $I_{J_m}$),   we find that 
\begin{eqnarray*}
\mu \big( T^m(I_{J_m}  \cap A_n) \big)  \ \leq  \  \left( \mu(T^m(I_{J_m})) + C \gamma^{n-m}\prod_{i=1}^d \left\lceil
\frac{2|K_{J_m}^{(i)} - 1|^{-1} \psi_i(m)}{\epsilon_n \psi_i(n)}
\right\rceil\right) \big(2(1+\epsilon_n)\big)^d \psi(n)    \, .  
\end{eqnarray*}
\\
Now  for any measurable set  $S \subseteq J_m $, we have that  $
 \mu(T^m(S) )     \, = \, |K_{J_m}|  \;    \mu(S) $, and so   it follows that 
\begin{eqnarray*}
\mu \big( I_{J_m}  \cap A_n \big)  \ \leq  \   \left( \mu(I_{J_m}) + C |K_{J_m}|^{-1}\gamma^{n-m}\prod_{i=1}^d \left\lceil
\frac{2|K_{J_m}^{(i)} - 1|^{-1} \psi_i(m)}{\epsilon_n \psi_i(n)}
\right\rceil\right) \big(2(1+\epsilon_n)\big)^d \psi(n).
\end{eqnarray*}

We are now in the position to calculate $\mu \big( A_m\cap A_n \big)$ by  summing  over all cylinder sets $J_m  \in \PP_m$. Before doing this,  we recall that
$$
|K_{J_m} |  = 1/ \mu( {J_m} )  \quad  { \rm  \  and  \  so\ } \quad   \sum_{J_m \in \PP_m }  |K_{J_m}|^{-1}   \ =  \ 1 \, , 
$$
and by  \eqref{gammadefsv}, for each $1 \le i\le d$, we have that 
$
|K_{J_m}^{(i)}-1|^{-1} \leq C_2 \gamma^m  $  where $ C_2:= (1 - \gamma)^{-1}  
$.
With this in mind,  together with the fact that $\psi_i(n) < 1 $  for $ n \in \N$ and $1 \le i \le d$,  it follows that 
\begin{align}  \label{tgb} 
\mu(A_m\cap A_n) &\leq \left( \mu(A_m) + C \gamma^{n-m}\prod_{i=1}^d \left\lceil
\frac{2C_2\gamma^m \psi_i(m)}{\epsilon_n \psi_i(n)}
\right\rceil\right) \big(2(1+\epsilon_n)\big)^d \psi(n) \nonumber   \\[2ex]
&\leq \left( \mu(A_m) + C \gamma^{n-m}\prod_{i=1}^d \left(1 + \frac{2C_2 \gamma^m \psi_i(m)}{\epsilon_n \psi_i(n)}
\right)\right) \big(2(1+\epsilon_n)\big)^d \psi(n) \nonumber  \\[2ex]
&\leq \left( \mu(A_m) + 2^d C \gamma^{n-m}\left(1 + (2C_2)^d \gamma^m \prod_{i=1}^d \frac{1}{\epsilon_n \psi_i(n)}
\right)\right) \big(2(1+\epsilon_n)\big)^d \psi(n) \nonumber \\[2ex]
&\leq  \mu(A_m) 2^d(1+2^d\epsilon_n) \psi(n)  +  2^d C \gamma^{n-m}\left(1 + (2C_2)^d \gamma^m \prod_{i=1}^d \frac{1}{\epsilon_n \psi_i(n)}
\right) 2^{2d} \psi(n) \nonumber  \\[2ex]
&\leq \mu(A_m)2^d \psi(n) + C_3 \epsilon_n \mu(A_m) \psi(n) + C_4 \gamma^{n-m} \psi(n) + C_5
\frac{\gamma^n}{\epsilon_n^d}  \, , 
\end{align}
where $C_3, C_4, C_5 > 0$ are absolute  constants. Now put  $\epsilon_n=n^{-2}$ for  $n \in \N$.  Then for arbitrary $a, b \in \N $ with $a < b$, it follows from \eqref{tgb} that 
\begin{align}  \label{doit}
2\mathop{\sum\sum}_{a\leq m<n\leq b} \mu(A_m\cap A_n)
& \ \leq  \  2\mathop{\sum\sum}_{a\leq m<n\leq b}\mu(A_m)2^d\psi(n) \  +  \ 2C_3 \sum_{a\leq m < b} \mu(A_m) \  \sum_{n=1}^\infty   n^{-2}  \nonumber  \\[1ex]
& \hspace*{20ex}  +  \ 2 C_4 \sum_{a < n \leq b} \psi(n) \sum_{m=1}^\infty \gamma^m  \ +  \ 2 C_5 \sum_{n=1}^\infty n^{2d+1}  \gamma^n   \nonumber \\[2ex]
& \ \leq  \  2\mathop{\sum\sum}_{a\leq m<n\leq b}\mu(A_m)2^d \psi(n) \  +  \ C'_3 \sum_{a\leq m < b} \mu(A_m)  \nonumber \\[0ex]
& \hspace*{20ex}  +  \  C'_4 \sum_{a < n \leq b} \psi(n)  \ +  \  C'_5  \, 
\end{align}
where $C'_3, C'_4, C'_5 > 0$ are absolute  constants.
It is now  reasonably straightforward  to verify that \eqref{doit}   together with Proposition~\ref{measAnSV}   implies the desired statement.

\vspace*{6ex}

\noindent{\it Acknowledgments}.  SV  would like to thank the one and only Bridget Bennett for being simply  ``lovely jubbly'' and oh yes ...  happy number sixty!  You are not only the GOAT in my life  but also ``In Our Time''.    Also many congratulations  to Lalji and Manchaben for making it into their nineties --  you  remain great role models and   what you guys have achieved in three continents is truly amazing!     Finally, thanks to Ayesha and Iona for being the best thing since slice bread and for allowing me to win from time to time when playing sports!    

The third-named author
was supported by a Royal Society University Research Fellowship, URF\tbs R1\tbs 180649.

\bibliographystyle{abbrv}

\bibliography{bibliography}

\providecommand{\bysame}{\leavevmode\hbox to3em{\hrulefill}\thinspace}
\providecommand{\MR}{\relax\ifhmode\unskip\space\fi MR }
\providecommand{\MRhref}[2]{%
  \href{http://www.ams.org/mathscinet-getitem?mr=#1}{#2}
}
\providecommand{\href}[2]{#2}
\begin{thebibliography}{1}

\bibitem{BRV}
Victor Beresnevich, Felipe Ram\'{\i}rez, and Sanju Velani, \emph{Metric
  {D}iophantine approximation: aspects of recent work}, Dynamics and analytic
  number theory, London Math. Soc. Lecture Note Ser., vol. 437, Cambridge Univ.
  Press, Cambridge, 2016, pp.~1--95. \MR{3618787}

\bibitem{Harman}
Glyn Harman, \emph{Metric number theory}, London Mathematical Society
  Monographs. New Series, 18, The Clarendon Press, Oxford University Press, New
  York, 1998.

\bibitem{He}
Yubin He, \emph{Quantitative recurrence properties and strong dynamical
  borel-cantelli lemma for dynamical systems with exponential decay of
  correlations}, preprint 2024, \url{https://arxiv.org/abs/2410.10211}.

\bibitem{Hussain2020DynamicalBL}
Mumtaz Hussain, Bing Li, David Simmons, and Bao-Wei Wang, \emph{Dynamical
  {B}orel–{C}antelli lemma for recurrence theory}, Ergodic Theory and
  Dynamical Systems \textbf{42} (2022), 1994--2008.

\bibitem{MR4675967}
Maxim Kirsebom, Philipp Kunde, and Tomas Persson, \emph{On shrinking targets
  and self-returning points}, Ann. Sc. Norm. Super. Pisa Cl. Sci. (5)
  \textbf{24} (2023), no.~3, 1499--1535. \MR{4675967}

\bibitem{LI2023108994}
Bing Li, Lingmin Liao, Sanju Velani, and Evgeniy Zorin, \emph{The shrinking
  target problem for matrix transformations of tori: Revisiting the standard
  problem}, Advances in Mathematics \textbf{421} (2023), 108994.

\bibitem{Persson}
Tomas Persson, \emph{A strong {B}orel--{C}antelli lemma for recurrence}, Studia
  Mathematica \textbf{268} (2023), 75--89.

\bibitem{Sponheimer}
Alejandro~Rodriguez Sponheimer, \emph{A recurrence-type strong borel--cantelli
  lemma for axiom a diffeomorphisms}, preprint 2023,
  \url{https://arxiv.org/abs/2307.12928}.

\end{thebibliography}
\end{document}